\definecolor{Code}{rgb}{0,0,0}
\definecolor{Decorators}{rgb}{0.5,0.5,0.5}
\definecolor{Numbers}{rgb}{0.5,0,0}
\definecolor{MatchingBrackets}{rgb}{0.25,0.5,0.5}
\definecolor{Keywords}{rgb}{0,0,1}
\definecolor{self}{rgb}{0,0,0}
\definecolor{Strings}{rgb}{0,0.63,0}
\definecolor{Comments}{rgb}{0,0.63,1}
\definecolor{Backquotes}{rgb}{0,0,0}
\definecolor{Classname}{rgb}{0,0,0}
\definecolor{FunctionName}{rgb}{0,0,0}
\definecolor{Operators}{rgb}{0,0,0}
\definecolor{Background}{rgb}{0.98,0.98,0.98}
\lstdefinelanguage{Python}{
% Layout
aboveskip=\bigskipamount,
belowskip=\bigskipamount,
frame=l,
numbers=left,
numberstyle=\footnotesize,
numbersep=1em,
xleftmargin=1em,
framextopmargin=2em,
framexbottommargin=2em,
showspaces=false,
showtabs=false,
showstringspaces=false,
tabsize=2,
% Basic
basicstyle=\ttfamily\small\setstretch{1},
backgroundcolor=\color{Background},
% Comments
comment=[l]{\#},
commentstyle=\color{Comments}\slshape,
% Strings
stringstyle=\color{Strings},
morestring=[s][\color{Strings}]{"}{"},
morestring=[s][\color{Strings}]{'}{'},
morecomment=[s][\color{Strings}]{"""}{"""},
morecomment=[s][\color{Strings}]{'''}{'''},
% keywords
morekeywords={import,from,class,def,for,while,if,is,in,elif,else,not,and,or,print,break,continue,return,True,False,None,access,as,del,except,exec,finally,global,import,lambda,pass,print,raise,try,assert,yield},
keywordstyle={\color{Keywords}\bfseries},
% additional keywords
morekeywords={[2]@invariant,pylab,numpy,np,scipy},
keywordstyle={[2]\color{Decorators}\slshape},
emph={self},
emphstyle={\color{self}\slshape},
escapeinside={\%*}{*)},
}
\newtheorem{theorem}{Theorem}
\newtheorem{theorem*}{Theorem}
\newtheorem{conjecture}[theorem]{Conjecture}
\newtheorem{lemma}[theorem]{Lemma}
\newtheorem{proposition}[theorem]{Proposition}
\newtheorem{remark}[theorem]{Remark}
\author{{Amine El Sahili}$^1$ \and {Zeina Ghazo Hanna}$^1$}
\address{Lebanese University, KALMA Laboratory, Beirut, Lebanon}
\email{sahili@ul.edu.lb ; zeina$\_$hanna$\_$93@live.com}
\title[Oriented Hamiltonian Paths]{Counting Hamiltonian Paths in Transitive Tournaments}
\begin{document}

\maketitle
\begin{abstract}
We construct a combinatorial function $\mathcal{F}$ which computes the number of oriented Hamiltonian paths of any given type, in a transitive tournament. We also study many properties of $\mathcal{F}$ that arise, and reach some observations.
\end{abstract}

\section{Introduction}

Let $T$ be a tournament of order $n$. An oriented path $P$ of $T$ is a subdigraph of $T$ whose underlying graph is a path. $P$ is a directed path if it is an oriented path whose all arcs have the same direction. An antidirected path $P$ in $T$ is an oriented path of $T$ whose arcs have successively opposite directions. A Hamiltonian oriented path $P$ in $T$ is an oriented path such that $V(P)=V(T)$.\\

\par Counting Hamiltonian paths in a tournament is a widely treated topic. Given a certain type of oriented Hamiltonian paths, (that is, fixing a random orientation of their arcs), one may ask how many such paths can be found in a tournament. No exact value of these numbers was given. What was done in this area is bounding the number of only the directed Hamiltonian paths  in tournaments. The oldest result through this investigation was given by Szele \cite{Szele}, who gave lower and upper bounds for the maximum number $P(n)$ of Hamiltonian directed paths in a tournament on $n$ vertices, and which was considered to be an introduction to the probabilistic methods in graph theory: $$\frac{n!}{2^{n-1}}\leq P(n) \leq c_1\frac{n!}{2^{\frac{3}{4}n}},$$where $c_1$ is a positive constant independent of $n$. Later on, the upper bound of $P(n)$ was improved by Alon \cite{Alon}: $P(n)\leq c_2.n^{\frac{3}{2}}\frac{n!}{2^{n-1}}.$ To prove it, he used Minc's conjecture (proved by Bregman) and observed that the permanent of the adjacency matrix of a tournament and the number of 1-factors of the tournament are equal. For the minimum number of Hamiltonian directed paths in a tournament, it can be easily verified that it is equal to $1$, and this value corresponds to the transitive tournament $TT_n$, which is a tournament of order $n$ whose all arcs are forward with respect to some enumeration of its vertices. But in the case of strong tournaments, this number increases a lot, as for the nearly-transitive tournament of order $n$, which is a tournament obtained from a transitive tournament $TT_n$ by reversing the orientation of the arc whose ends are the extremities of the unique directed Hamiltonian path in $TT_n$. In the nearly-transitive tournament, the number of directed Hamiltonian paths is equal to $2^{n-2}+1$. In 1972, Moon \cite{Moon} gave upper and lower bounds for the minimum number $p(n)$ of directed Hamiltonian paths in a strong tournament of order $n$:
\begin{equation*}
\alpha^{n-1}\leq p(n)\leq
\left\lbrace
\begin{array}{ccc}
3 \cdot \beta^{n-3}  & \mbox{for} & n\equiv 0 \mod 3\\
\beta^{n-1} & \mbox{for} & n\equiv 1 \mod 3\\
9 \cdot \beta^{n-5} & \mbox{for} & n\equiv 2 \mod 3
\end{array}\right.
\end{equation*}
where $\alpha=6^{\frac{1}{4}}\approx 1.565$ and $\beta=5^{\frac{1}{3}}\approx 1.710$, and in 2006, after finding an interesting characterization of strong tournaments, Busch \cite{Busch} improved this result by proving that the exact value of this minimum number is equal to the upper bound given by Moon. \\
\linebreak
\noindent Concerning non-directed Hamiltonian paths, Rosenfeld \cite{Rosenfield2} proved in 1974 that the number of antidirected Hamiltonian paths starting with a forward arc is equal to the number of antidirected Hamiltonian paths starting with a backward arc, in any tournament, which can be stated as: the number of Hamiltonian antidirected paths in any tournament $T$ is equal to the number of antidirected Hamiltonian paths in the complement of $T$. In \cite{ESandGH}, this result was generalized, by proving that every tournament and its complement contain the same number of oriented Hamiltonian paths of any given type. %On the other hand, for some positive integer $n$, D\'esir\'e Andr\'e \cite{DA} studied in 1881 the notion of alternating permutations of $n$ distinct labelled letters. A permutation of $n$ letters labelled $\lbrace a_1, a_2, \dots, a_n \rbrace$ is a given order of these letters, and for a fixed permutation of the $a_i's$, one defines the sequence of relative integers $s_1, s_2, \dots, s_{n-1}$ such that $\forall 1\leq i \leq n-1$, $s_i=a_i-a_{i+1}$. If every two consecutives elements $s_i's$ of this sequence have opposite signs, then the corresponding permutation of the $a_i's$ is said to be alternating, and, after proving that the number of alternating permutations of $n$ distinct labelled letters is always even, D\'esir\'e Andr\'e denoted this number by $2An$. Interestingly, for the particular case of a transitive tournament $TT_n$, after labelling its vertices by $v_1, v_2, \dots, v_n$, such that all its arcs are forward with respect to this enumeration, it is easy to see that each antidirected Hamiltonian path in this tournament corresponds to exactly two alternating permutations of the $n$ indexes, since each path has two enumerations of its vertices. Based on this, in his paper, Rosenfeld \cite{Rosenfield2} proved that in a transitive tournament $TT_n$, the number of distinct antidirected Hamiltonian paths is equal to $An$ if $n$ is even, and $2An$ if $n$ is odd. Furthermore, 
Rosenfeld \cite{Rosenfield2} also proved that for any tournament $T$ of order $n$, where $T \neq TT_n$, the number of its antidirected Hamiltonian paths is less than the number of antidirected Hamiltonian paths in $TT_n$.\\

\par In this paper, we are interested in counting Hamiltonian paths in transitive tournaments. We construct a combinatorial function giving the exact number of oriented Hamiltonian paths of any given type in a transitive tournament, study some properties this function holds, and build a program to compute its values, which will yield to an interesting observation regarding the number of antidirected Hamiltonian paths in a transitive tournament.
\smallskip
\section{Basic definitions and preliminary results}

Let $\mathbb{K}_s = \lbrace (\alpha_1,\alpha_2,\dots,\alpha_s) \in \mathbb{Z}^s, \ s\geq 1, \ \alpha_i \cdot \alpha_{i+1}<0, \ \forall \ 1\leq i \leq s-1 \rbrace$.\\
\noindent Since the orientations of the arcs of an oriented path are arbitrary, one may give a more precise definition to an oriented path by assigning to each one a type, as defined in \cite{ElSahili-AA2}:\\
\linebreak
Let $(\alpha_1,\alpha_2,\dots,\alpha_s) \in \mathbb{K}_s$. An oriented path $P$ is of type $P(\alpha_1,\alpha_2,\dots,\alpha_s)$ if $P$ is formed by $s$ blocks (a block is a maximal directed path) $I_1, I_2, \dots, I_s$ such that $length(I_i) = \mid\! I_i \!\mid = \mid\! \alpha_i \!\mid$ and with $x_i, y_i$ being the ends of the block $I_i$, $I_i \cap I_{i+1} = \{y_i\} =
\{x_{i+1}\}$, %we choose $x_1$ and $y_1$ such that the following condition is verified: $\alpha_1 >0 \iff I_1 \text{ is directed from } x_1 \text{ to } y_1$, and so
the following condition is verified: $\forall \ i=1,\dots,s, \ \alpha_i>0 \iff I_i \text{ directed from } x_i \text{ to } y_i$. %We note $end(I_i)=\{x_i,y_i\}$, and
 We write $P = I_1 I_2 \dots I_s$. %For $u,v\in I_i$, $I_i[u,v]$ denotes the subpath of $I_i$ of ends $u$ and $v$. \\
If the oriented path is Hamiltonian in a tournament $T$ of order $n$, we have $\sum_{i=1}^s |\alpha_i|=n-1$.\\
\linebreak
\noindent This notation can be extended by allowing $\alpha_i$ to be $0$, by considering $P(0,\alpha_2,...,\alpha_s)=P(\alpha_2,...,\alpha_s)$, and $P(\alpha_1,...,\alpha_{s-1},0)=P(\alpha_1,...,\alpha_{s-1})$, and $P(\alpha_1, ..., \alpha_i, 0, \alpha_{i+2}, ..., \alpha_s) = P(\alpha_1,...,\alpha_i+\alpha_{i+2},...,\alpha_s)$ (remark that in this case, $\alpha_i$ and $\alpha_{i+2}$ have the same sign). This is useful for latter calculations.\\
\linebreak
We denote by $\mathcal{P}_T(\alpha_1,\alpha_2,\dots,\alpha_s)$ the set of oriented paths in $T$ of type $P(\alpha_1,\alpha_2,\dots,\alpha_s)$, and $f_T(\alpha_1,\alpha_2,\dots,\alpha_s)$ denotes the cardinal of this set, i.e. the number of oriented paths of type $P(\alpha_1,\alpha_2,\dots,\alpha_s)$ in $T$. Particularly $f_T(n-1)$ is the number of directed Hamiltonian paths in a tournament $T$ on $n$ vertices.\\ Note that the sets  $\mathcal{P}_T(\alpha)$, $\alpha=(\alpha_1,
\alpha_2,\dots,\alpha_s) \in \mathbb{K}_s$, $\sum\limits_{i=1}^s \mid \alpha_i \mid =n-1$, form a partition of the set $\mathcal{P}_T$ of all the Hamiltonian oriented paths in a tournament $T$ of order $n$.\\
\linebreak
For $\alpha = (\alpha_1,\dots,\alpha_s) \in \mathbb{Z}^s$, we denote by $- \alpha$ the tuple $(-\alpha_1,\dots,-\alpha_s)$ and by
$\overline{\alpha}$ the tuple $(\alpha_s,\alpha_{s-1}\dots,\alpha_1)$.\\
\noindent Two paths are equal if they have the same set of arcs. Since every oriented path can have at most two types, depending on which one of its ends we choose to start its enumeration, we have the following equivalence:
$$\mathcal{P}_T(\alpha) = \mathcal{P}_T(\beta) \iff \alpha = \beta \text{ or } \alpha = -\overline{\beta}. \ \ (*)$$
\linebreak
\noindent Let $\alpha = (\alpha_1,\dots,\alpha_s) \in \mathbb{Z}^s$. For some $1\leq i \leq s$, $\alpha^i$ denotes the tuple $(\alpha_1,\alpha_2,\dots,\alpha_i*1,\alpha_{i+1},\dots,\alpha_s)$, where $\alpha_i*1=\alpha_i-1$ if $\alpha_i>0$, and $\alpha_i*1=\alpha_i+1$ otherwise.

\begin{proposition}\label{PP'} We have:
$$\mathcal{P}_T(\alpha^i)=\mathcal{P}_T(\alpha^j)\iff i=j \ or \ \alpha^j=-\overline{\alpha^i}.$$
\end{proposition}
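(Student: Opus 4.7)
The plan is to reduce the set identity $\mathcal{P}_T(\alpha^i)=\mathcal{P}_T(\alpha^j)$ to a tuple identity via the equivalence $(*)$ and then to show directly that $\alpha^i=\alpha^j$ forces $i=j$.

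For the easy direction, $i=j$ is trivial, and if $\alpha^j=-\overline{\alpha^i}$ then applying $-\overline{\cdot}$ to both sides yields $\alpha^i=-\overline{\alpha^j}$, so $(*)$ immediately gives equality of the path sets.

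For the converse, assume $\mathcal{P}_T(\alpha^i)=\mathcal{P}_T(\alpha^j)$. Equivalence $(*)$ gives either $\alpha^i=\alpha^j$ or $\alpha^i=-\overline{\alpha^j}$; in the second case, reapplying $-\overline{\cdot}$ produces $\alpha^j=-\overline{\alpha^i}$, which is the desired conclusion. It therefore suffices to establish $\alpha^i=\alpha^j\Rightarrow i=j$, which I would handle by a case analysis on $|\alpha_i|$ and $|\alpha_j|$, assuming without loss of generality $i<j$. If both $|\alpha_i|,|\alpha_j|\geq 2$, the tuples $\alpha^i$ and $\alpha^j$ both have length $s$, and at position $i$ they read $\alpha_i*1$ and $\alpha_i$ respectively, which differ. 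If exactly one of them equals $1$, then one tuple has length strictly less than $s$ (because of the collapsing convention $P(\ldots,0,\ldots)=P(\ldots)$) while the other has length $s$, so the two cannot coincide. The remaining case $|\alpha_i|=|\alpha_j|=1$ splits further according to whether the two indices are interior or terminal: when both are interior, comparing the entry at position $i-1$ of the two collapsed tuples forces $\alpha_{i+1}=0$, a contradiction; when exactly one is terminal, the lengths $s-1$ and $s-2$ already disagree; and when $i=1,\,j=s$, equality of $(\alpha_2,\ldots,\alpha_s)$ with $(\alpha_1,\ldots,\alpha_{s-1})$ would require $\alpha_{k+1}=\alpha_k$ for all $k$, which is ruled out by the alternating sign condition $\alpha_k\alpha_{k+1}<0$ built into the definition of $\mathbb{K}_s$.

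The main obstacle is the bookkeeping in the degenerate subcases where some $|\alpha_i|=1$, because the collapsing convention may merge adjacent blocks and shorten the tuple, forcing a careful reindexing before entries can be compared. Beyond this case analysis, however, the alternating sign condition defining $\mathbb{K}_s$ is sharp enough to close every scenario, so no further structural input should be required.
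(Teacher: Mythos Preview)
Your proof is correct and follows the same route as the paper, namely reducing everything to the equivalence $(*)$; the paper's own proof is in fact the single sentence ``direct consequence of the property $(*)$.'' You go further by carefully verifying that $\alpha^i=\alpha^j$ (as path types, after the collapsing convention) forces $i=j$, a step the paper leaves implicit---your case analysis on whether $|\alpha_i|$ or $|\alpha_j|$ equals $1$ is sound and arguably fills a small gap in the paper's one-line justification.
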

\begin{proof} The proof of Proposition \ref{PP'} is a direct consequence of the property $(*)$.
\end{proof}
Let $\alpha = (\alpha_1,\dots,\alpha_s) \in \mathbb{Z}^s$. The tuple $\alpha$ is symmetric if $\alpha=-\overline{\alpha}$. An oriented path $P$ in a tournament $T$ is symmetric if $P$ is of type $P(\alpha)$, where $\alpha$ is symmetric.\\

\par Let $T$ be a tournament of order $n$. $T$ is transitive, denoted by $TT_n$, if there exists an enumeration of its vertices such that $V(T)=\lbrace v_1,v_2,\dots ,v_n\rbrace $ and $E(T)=\lbrace (v_i,v_{j});i<j \rbrace$.\\
\linebreak
\noindent The number of directed Hamiltonian paths in $TT_n$ is equal to $1$, that is $$f_{TT_n}(n-1)=1.$$In fact, let $P=v_1v_2...v_n$ be a Hamiltonian path of $TT_n$, and suppose that $TT_n$ contains another Hamiltonian path $P'=v_{i_1}v_{i_2}...v_{i_n}$. Since $TT_n$ is transitive, $(v_{i_{k-1}},v_{i_k}) \in E(T) \Rightarrow i_{k-1} < i_k$, thus $1\leqslant i_1<i_2<...<i_{n-1}<i_n\leqslant n$ and therefore $i_j=j$ $\forall 1 \leq j \leq n$, so $P'=v_1v_2...v_n$ and we conclude that the number of directed Hamiltonian paths in $TT_n$ is equal to 1.\\
\linebreak
\noindent Also, since a transitive tournament $TT_n$ and its complement are isomorphic, then $\forall$ $\alpha=(\alpha_1,\alpha_2,\dots,\alpha_s) \in \mathbb{K}_s$ such that $\sum_{i=1}^s |\alpha_i|=n-1$, we have: $$f_{TT_n}(\alpha)=f_{TT_n}(-\alpha).$$
Moreover, one can easily compute the number of oriented Hamiltonian paths with two blocks in $TT_n$:
\begin{proposition}\label{combination}
Let $TT_n$ be the transitive tournament on $n$ vertices, and $(\alpha_1,\alpha_2) \in \mathbb{K}_2$, $|\alpha_1|\neq |\alpha_2|$. Then we have: $$f_{TT_n}(\alpha_1,\alpha_2)=\binom{n-1}{|\alpha_1|}.$$
\end{proposition}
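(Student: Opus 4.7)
The plan is to exploit a key rigidity of $TT_n$: a directed path in $TT_n$ is entirely determined by its vertex set, since transitivity forces the vertices to appear in strictly increasing order of index along the arcs. An oriented Hamiltonian path with two blocks is therefore encoded by the turning vertex $y_1$ (the vertex where the two blocks meet) together with the bipartition of the remaining $n-1$ vertices between the two blocks.

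I would first reduce to a single sign convention: by the identity $f_{TT_n}(\alpha)=f_{TT_n}(-\alpha)$ recalled just above the statement, it suffices to handle the case $\alpha_1>0>\alpha_2$. In this case the path has shape $x_1\to\cdots\to y_1 \leftarrow\cdots\leftarrow y_2$; both blocks, when traversed along their arcs, terminate at $y_1$. Since every arc in $TT_n$ climbs in index, every vertex of the path has index at most that of $y_1$, so $y_1=v_n$ is forced. The remaining $n-1$ vertices must then split into a subset $A\subset\{v_1,\dots,v_{n-1}\}$ of size $|\alpha_1|$ (supplying the initial segment of $I_1$) and its complement of size $|\alpha_2|$ (supplying the initial segment of $I_2$). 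Conversely, each such choice of $A$ produces exactly one path --- join the unique directed path on $A\cup\{v_n\}$ to the unique directed path on $(\{v_1,\dots,v_{n-1}\}\setminus A)\cup\{v_n\}$ at $v_n$ --- so the map $A\mapsto P$ is a bijection and yields $\binom{n-1}{|\alpha_1|}$ paths.

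The one subtle point to verify is that no double-counting slips in through the equivalence $(*)$. When $|\alpha_1|\neq|\alpha_2|$, the tuple $-\overline{(\alpha_1,\alpha_2)}=(-\alpha_2,-\alpha_1)$ is genuinely distinct from $(\alpha_1,\alpha_2)$ but carries the same sign pattern, so by $(*)$ it labels the very same set of paths; this is consistent with the formula because $\binom{n-1}{|\alpha_1|}=\binom{n-1}{|\alpha_2|}$ whenever $|\alpha_1|+|\alpha_2|=n-1$. There is no real obstacle to the argument --- once one observes that the turning vertex is pinned down, the rest is straightforward combinatorial bookkeeping built on the uniqueness of directed paths on a prescribed vertex set in $TT_n$.
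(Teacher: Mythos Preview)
Your proposal is correct and follows essentially the same approach as the paper: reduce to $\alpha_1>0$ via $f_{TT_n}(\alpha)=f_{TT_n}(-\alpha)$, observe that the turning vertex is forced to be the sink $v_n$, and then note that the choice of the $|\alpha_1|$ remaining vertices for the first block determines the path uniquely by transitivity. Your additional remark about why the hypothesis $|\alpha_1|\neq|\alpha_2|$ prevents double-counting through the equivalence $(*)$ is a nice clarification that the paper defers to the subsequent remark on the symmetric case.
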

\begin{proof}
Let's enumerate the vertices of $TT_n$ such that $V(T)=\lbrace v_1,v_2,\dots ,v_n\rbrace $ and $E(T)=\lbrace (v_i,v_{j});i<j \rbrace$, and suppose without loss of generality that $\alpha_1>0$ since $f_{TT_n}(\alpha)=f_{TT_n}(-\alpha)$. Consider a Hamiltonian path $P$ of $TT_n$ of type $P(\alpha_1, \alpha_2)$. Since $\alpha_1>0$ then the sink $v_n$ should be at the end of the first block. Now to construct the first block of $P$, we have to choose any $\alpha_1$ vertices from the remaining $n-1$ vertices. Once the vertices are chosen, there is only one path $P$ corresponding to this choice: in fact, since $TT_n$ is transitive, the vertices chosen to construct the first block must be aligned in an increasing order of indexes, and all remaining vertices of $TT_n$ should be aligned in a decreasing order of indexes to form the second block. Hence, the number of paths of type $P(\alpha_1,\alpha_2)$ in $TT_n$ is exactly the number of possible choices of $\alpha_1$ vertices among $n-1$, that is $\binom{n-1}{|\alpha_1|}$.
\end{proof}
\begin{remark}\label{combination2}
We have $f_{TT_n}(\alpha_1,-\alpha_1)=\dfrac{\binom{n-1}{|\alpha_1|}}{2}.$\\
In fact, following the same construction given in the proof of Proposition \ref{combination}, we may remark that if $|\alpha_1|=|\alpha_2|$, that is if $(\alpha_1,\alpha_2)$ is symmetric, then each constructed path $P$ of type $P(\alpha_1,\alpha_2)$ is in total counted twice, which clarify the necessity to divide the total number of paths of type $P(\alpha_1,\alpha_1)$ in $TT_n$ by $2$.
\end{remark}
\par As we have just noticed, a transitive tournament of order $n$ has an interesting structure, allowing us to count easily the number of some types of oriented Hamiltonian paths. In the following section, we will define a function that will allow us to compute the number of any type of oriented Hamiltonian paths in $TT_n$.\\

\section{The path-function $\mathcal{F}$}
Let $\mathcal{K}=\lbrace (\alpha_1,\alpha_2,\dots,\alpha_s)\in \mathbb{K}_s, s\in \mathbb{N}^* \rbrace$. Let $\mathcal{F}$ be the following mapping:
 \begin{eqnarray*}
% \nonumber % Remove numbering (before each equation)
  \mathcal{F} & & \mathcal{K} \longrightarrow \mathbb{N} \\
   & & (\alpha_1,\alpha_2,\dots,\alpha_s)\longrightarrow  \mathcal{F}(\alpha_1,\alpha_2,\dots,\alpha_s),
\end{eqnarray*}
\noindent defined by the recurrence relation:
\begin{eqnarray*}
% \nonumber % Remove numbering (before each equation)
  \mathcal{F}(\alpha_1,\alpha_2,\dots,\alpha_s)=& &  \mathcal{F}(\alpha_1*1,\alpha_2,\dots,\alpha_s)+ \mathcal{F}(\alpha_1,\alpha_2*1,\dots,\alpha_s) \\
  &&+\dots + \mathcal{F}(\alpha_1,\alpha_2,\dots,\alpha_s*1)
\end{eqnarray*}
where $\alpha_i*1=\alpha_i-1$ if $\alpha_i >0$ and $\alpha_i*1=\alpha_i+1$ otherwise, and satisfying: \begin{enumerate}
\item $\forall$ $t$ $\in \mathbb{N}^*$, $ \mathcal{F}(0,\alpha_2,\dots,\alpha_t)= \mathcal{F}(\alpha_2,\dots,\alpha_t)$,
\item $\forall$ $t'$ $\in \mathbb{N}^*$, $ \mathcal{F}(\alpha_1,\alpha_2,\dots,\alpha_{t'},0)= \mathcal{F}(\alpha_1,\alpha_2,\dots,\alpha_{t'})$,
\item $\forall$ $r$ $\in \mathbb{N}^*$, $ \mathcal{F}(\alpha_1,\dots,\alpha_r,0,\alpha_{r+2},\dots,\alpha_s)= \mathcal{F}(\alpha_1,\dots,\alpha_r+\alpha_{r+2},\dots,\alpha_s)$,
\item $\forall$ $\alpha$ $\in \mathbb{Z}^*$, $ \mathcal{F}(\alpha)=1$.
\end{enumerate}
We call $ \mathcal{F}$ the path-function.

\begin{theorem}\label{MAIN}
Let $TT_n$ be a transitive tournament of order $n$, and $\alpha=(\alpha_1,\alpha_2,\dots,\alpha_s)\in \mathbb{K}_s$ such that $\displaystyle\sum_{i=1}^s |\alpha_i|=n-1$. Then
$$
f_{TT_n}(\alpha) = \left\{ \begin{array}{rl} \mathcal{F}(\alpha) & \ \text{if } \alpha  \text{ is non symmetric}, \\
 & \\
\dfrac{\mathcal{F}(\alpha)}{2} & \ \text{if } \alpha  \text{ is symmetric} .
\end{array} \right.
$$
\end{theorem}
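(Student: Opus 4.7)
The plan is to reduce the two-case statement to a single identity and prove it by induction on $n$ via a deletion-of-sink bijection.

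\emph{First, pass to ordered paths.} I introduce $F(\alpha)$, the number of ordered Hamiltonian paths of type $\alpha$ in $TT_n$ (sequences $(w_1,\ldots,w_n)$ of all the vertices whose successive arcs realise the sign pattern prescribed by $\alpha$). By the property $(*)$, an unordered path admits exactly one such ordering when $\alpha$ is non-symmetric, and exactly two when $\alpha$ is symmetric. Hence $F(\alpha)=f_{TT_n}(\alpha)$ in the non-symmetric case and $F(\alpha)=2\,f_{TT_n}(\alpha)$ in the symmetric case, and the theorem reduces to the single identity $F(\alpha)=\mathcal{F}(\alpha)$.

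\emph{Second, induct on $n=1+\sum_i|\alpha_i|$.} The base $n=2$ is clear: the single arc of $TT_2$ gives $F(\pm 1)=1=\mathcal{F}(\pm 1)$ by rule (4). For the inductive step I want the recurrence $F(\alpha)=\sum_{j=1}^s F(\alpha^j)$, which I obtain by exhibiting a bijection
\[
\Phi:\{\text{ordered H-paths of type }\alpha\text{ in }TT_n\}\longrightarrow\bigsqcup_{j=1}^s\{\text{ordered H-paths of type }\alpha^j\text{ in }TT_{n-1}\}
\]
that deletes the global sink $v_n$ from the vertex sequence of $P$. The image is always a Hamiltonian path of $TT_{n-1}$, because any two vertices of a tournament are joined by an arc.

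\emph{Third, identify the type of $\Phi(P)$.} Since $v_n$ has only incoming arcs in $TT_n$, it must occupy a local sink of $P$: the endpoint $x_1$ (forcing $\alpha_1<0$), the endpoint $y_s$ (forcing $\alpha_s>0$), or an internal junction $y_i=x_{i+1}$ with $\alpha_i>0>\alpha_{i+1}$. A short case analysis then shows that the type of $\Phi(P)$ is exactly $\alpha^1$, $\alpha^s$, or, in the internal case, $\alpha^{i+1}$ or $\alpha^i$ according as the new arc between $w_{k-1}$ and $w_{k+1}$ is forward or backward in $TT_{n-1}$. The boundary conventions (1)--(3) for $\mathcal{F}$ are precisely designed to absorb the degenerate cases in which a block of length one disappears upon shortening and forces its (now same-sign) neighbours to merge.

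\emph{Fourth, inverse and conclusion.} For bijectivity I write down the inverse: for each $P'$ of type $\alpha^j$, there is a unique position at which to reinsert $v_n$ so as to recover type $\alpha$, determined by $j$ together with the sign of $\alpha_j$ (an endpoint of $P'$ when $\alpha_j$ has the right sign at $j\in\{1,s\}$, otherwise the junction in $\alpha$ adjacent to the shortened block). Checking that this insertion actually yields type $\alpha$, and that no other position does, is routine case-work mirroring the forward analysis. The recurrence $F(\alpha)=\sum_j F(\alpha^j)$ then combines with the inductive hypothesis $F(\alpha^j)=\mathcal{F}(\alpha^j)$ and the defining recurrence of $\mathcal{F}$ to give $F(\alpha)=\mathcal{F}(\alpha)$, closing the induction. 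I expect the main technical hurdle to be the bookkeeping of the collapse cases $|\alpha_j|=1$: one must check that the rules (1)--(3) for $\mathcal{F}$ correspond exactly to the block mergers triggered in the image of $\Phi$, and that when $\alpha$ admits several local sinks the type of $P'$ unambiguously selects the correct insertion position.
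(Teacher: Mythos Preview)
Your approach is correct and genuinely different from the paper's. The paper works directly with \emph{unordered} paths and is therefore forced to split the induction into cases: first on whether $\alpha$ is symmetric, and then, in the non-symmetric case, on whether some $\alpha^{i_0}$ happens to be symmetric. To control when the sets $\mathcal{P}_{\widetilde T}(\alpha^i)$ coincide and when the deletion map is $2$-to-$1$, the paper needs Lemma~\ref{L} (characterising $\alpha^i=-\overline{\alpha^j}$) and Lemma~\ref{LL} (at most one $\alpha^i$ can be symmetric). By passing to \emph{ordered} paths and proving the single identity $F(\alpha)=\mathcal{F}(\alpha)$, you absorb the factor $2$ once and for all into the relation between $F$ and $f_{TT_n}$, and the inductive step becomes one clean bijection with no symmetry bookkeeping at all. (The paper deletes the source where you delete the sink; this is immaterial.) One small point you rely on implicitly and should state: for $i\neq j$ the normalised sign patterns $\alpha^i$ and $\alpha^j$ are distinct --- deleting a symbol from block $i$ versus block $j$ of the $\pm$-word of $\alpha$ cannot yield the same word, since the signs between the two positions are not constant. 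This guarantees that the sets of ordered paths of type $\alpha^j$ are pairwise disjoint, so your formal disjoint union is an honest one and the surjectivity of $\Phi$ goes through; the paper trades this easy observation for its two lemmas, so your packaging is the more economical of the two.
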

\bigskip
\smallskip
\noindent In order to prove this theorem, we need these two lemmas:

\begin{lemma}\label{L}
Let $\alpha=(\alpha_1,\alpha_2,\dots,\alpha_s)\in \mathbb{K}_s$, and $i$,$j$ such that $1\leq i < j \leq s$. We have: $$\alpha^i=-\overline{\alpha^j}\iff \alpha \ is \ symmetric \ and \ i+j=s+1.$$
\end{lemma}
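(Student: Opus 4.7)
The plan is to establish both implications by directly comparing the tuples $\alpha^i$ and $-\overline{\alpha^j}$ coordinate by coordinate. Recall that $\alpha^i$ differs from $\alpha$ only at position $i$, where the entry is $\alpha_i * 1$, and $-\overline{\alpha^j}$ has $k$-th entry equal to $-\alpha_{s+1-k}$ whenever $s+1-k \neq j$, and equal to $-(\alpha_j * 1)$ when $s+1-k = j$, i.e.\ when $k = s+1-j$. So the equation $\alpha^i = -\overline{\alpha^j}$ splits into three families of coordinate conditions, according to whether $k$ equals $i$, equals $s+1-j$, or neither.

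For the $(\Leftarrow)$ direction, I would assume $\alpha$ is symmetric and $i+j = s+1$, so that the two ``exceptional'' indices coincide: $i = s+1-j$. The condition at $k \neq i$ reduces to $\alpha_k = -\alpha_{s+1-k}$, which is exactly symmetry. The condition at $k=i$ reads $\alpha_i*1 = -(\alpha_j*1)$; since symmetry forces $\alpha_j = -\alpha_i$ and $\alpha_i, \alpha_j$ have opposite signs (so the $*1$ operation decreases $|\alpha_i|$ and $|\alpha_j|$ symmetrically), a one-line case split on the sign of $\alpha_i$ verifies the equality.

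For the $(\Rightarrow)$ direction, I would split on whether $i + j = s+1$. If it does, then for all $k \neq i$ the coordinate condition reads $\alpha_k = -\alpha_{s+1-k}$, which (as $k$ and $s+1-k$ always give the same symmetric relation) delivers full symmetry of $\alpha$. If instead $i + j \neq s+1$, then $i$ and $s+1-j$ are distinct exceptional indices, and I would examine the coordinate $k = s+1-i$. Provided $s+1-i \notin \{i, s+1-j\}$, this coordinate sits in the ``generic'' block and gives $\alpha_{s+1-i} = -\alpha_i$; plugging back into the exceptional condition at $k=i$, namely $\alpha_i*1 = -\alpha_{s+1-i}$, yields $\alpha_i * 1 = \alpha_i$, which contradicts the definition of $*1$.

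The only subtlety — and the main obstacle — is the degenerate subcase $s+1-i = i$, i.e.\ $s$ odd and $i = (s+1)/2$, where the above argument short-circuits. Here the condition at $k=i$ becomes $\alpha_i * 1 = -\alpha_i$, which forces $2\alpha_i \pm 1 = 0$ and so $\alpha_i = \pm 1/2 \notin \mathbb{Z}$, a contradiction. (The sibling subcase $s+1-i = s+1-j$ is vacuous because it would require $i = j$.) These observations dispatch Case B completely and close the proof.
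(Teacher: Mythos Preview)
Your proof is correct and follows essentially the same coordinate-by-coordinate comparison as the paper's argument. In fact, your treatment of the degenerate subcase $s+1-i=i$ (i.e.\ $s$ odd and $i=(s+1)/2$) is more explicit than the paper's, which tacitly assumes $s+1-i\neq i$ when asserting that the $(s+1-i)$-th component of $\alpha^i$ equals $\alpha_{s+1-i}$.
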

\begin{proof}
We have $\alpha^i=(\alpha_1,\dots,\alpha_i*1,\dots,\alpha_j,\dots,\alpha_s)$ and $\alpha^j=(\alpha_1,\dots,\alpha_i,\dots,\alpha_j*1,\dots,\alpha_s)$ thus $-\overline{\alpha^j}=(-\alpha_s,\dots,-(\alpha_j*1),\dots,-\alpha_i,\dots,-\alpha_1)$.\\
For the sufficient condition, suppose that $\alpha$ is symmetric and $i+j=s+1$ and let's prove that $\alpha^i=-\overline{\alpha^j}$. The $i^{th}$ component of $\alpha^i$ is $\alpha_i*1$, and since $i+j=s+1$, then the $i^{th}$ component of $-\overline{\alpha^j}$ is exactly $-(\alpha_j*1)$, because the $i^{th}$ component of $\overline{\alpha^j}$ is the $(s+1-i)^{th}$ component of $\alpha^j$. Moreover, since $\alpha$ is symmetric, then $\alpha_i = -\alpha_{s+1-i} = - \alpha_j$ thus if we assume without loss of generality that $\alpha_i>0$, we have $\alpha_i*1=\alpha_i+1$, then $-(\alpha_j*1)=-(\alpha_j-1)=-\alpha_j+1$. Now, again, since $\alpha$ is symmetric, then $(\alpha_1,\alpha_2,\dots,\alpha_s)=(-\alpha_s,-\alpha_{s-1},\dots,-\alpha_1)$, and using what preceded, we get $(\alpha_1,\dots,\alpha_i+1,\dots,\alpha_j,\dots,\alpha_s)=(-\alpha_s,\dots,-\alpha_j+1,\dots,-\alpha_i,\dots,-\alpha_1)$ with $-\alpha_j+ 1$ on the $i^{th}$ position, thus $\alpha^i=-\overline{\alpha^j}$.\\
For the necessary condition, suppose that $\alpha^i=-\overline{\alpha^j}$. The $i^{th}$ component of $\alpha^i$ is $\alpha_i*1$. If $i+j\neq s+1$, then the $i^{th}$ component of $-\overline{\alpha^j}$ is equal to some $-\alpha_t$, $t\neq j$. On the other hand, the $(s+1-i)^{th}$ component of $\alpha^i$ is $\alpha_t$, and the $(s+1-i)^{th}$ component of $-\overline{\alpha^j}$ is $-\alpha_i$ since $\alpha_i$ is not modified in $\alpha^j$ because $i\neq j$. As a result, since $\alpha^i=-\overline{\alpha^j}$, we get $\alpha_i*1=-\alpha_t=-(-\alpha_i)=\alpha_i$ which is a contradiction. So $i+j=s+1$ which implies that $\alpha_j=-\alpha_i$ since the $j^{th}$ component of $\alpha^i$ is $\alpha_j$ and the $j^{th}$ component of $-\overline{\alpha^j}$ is $-\alpha_i$, and $\alpha^i=-\overline{\alpha^j}$.  Thus if we suppose w.l.o.g. that $\alpha_i>0$, then $\alpha_i*1=\alpha_i+1$, then $-(\alpha_j*1)=-(\alpha_j-1)=-\alpha_j+1$. Now since  $\alpha^i=-\overline{\alpha^j}$ then  $(\alpha_1,\dots,\alpha_i+1,\dots,\alpha_j,\dots,\alpha_s)=(-\alpha_s,\dots,-\alpha_j+1,\dots,-\alpha_i,\dots,-\alpha_1)$ with $-\alpha_j+1$ on the $i^{th}$ position (since $i+j=s+1$), and as a result $(\alpha_1,\dots,\alpha_s)=(-\alpha_s,\dots,-\alpha_1)$ so $\alpha$ is symmetric which concludes the proof.
\end{proof}
\begin{lemma}\label{LL}
Let $\alpha=(\alpha_1,\dots,\alpha_s) \in \mathbb{K}_s$, we have: $$\alpha^i \ is \  symmetric \Rightarrow \alpha^j \ is \ non-symmetric \ \forall \ 1\leq j \leq s, \ j\neq i.$$
\end{lemma}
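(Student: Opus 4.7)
The plan is to argue by contradiction: suppose that for some $i \neq j$ (without loss of generality $i<j$), both $\alpha^i$ and $\alpha^j$ are symmetric, and derive a contradiction. The key observation is that $\alpha^i$ and $\alpha^j$ agree coordinate-by-coordinate with $\alpha$ everywhere except at positions $i$ and $j$; in particular $(\alpha^i)_i = \alpha_i * 1$ while $(\alpha^j)_i = \alpha_i$, and these two values are necessarily distinct since $\alpha_i \in \mathbb{Z}^{*}$. The idea is then to compare what the two symmetry conditions impose at position $i$: unless the mirror index $s+1-i$ happens to coincide with $i$ or with $j$, the $(s+1-i)$-th entries of $\alpha^i$ and $\alpha^j$ are both equal to $\alpha_{s+1-i}$, which will collapse the two symmetry equations to something impossible.

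Accordingly I would split into three subcases. First, if $s+1-i \notin \{i,j\}$, symmetry of $\alpha^i$ at position $i$ reads $\alpha_i * 1 = -\alpha_{s+1-i}$, while symmetry of $\alpha^j$ at position $i$ reads $\alpha_i = -\alpha_{s+1-i}$, forcing $\alpha_i * 1 = \alpha_i$, which is absurd. Second, in the ``central'' case $s+1-i = i$ (so $s$ odd and $i=(s+1)/2$), symmetry of $\alpha^i$ at its central coordinate forces $\alpha_i * 1 = 0$, while symmetry of $\alpha^j$ at that same central coordinate (whose $\alpha^j$-value is still $\alpha_i$ since $i \neq j$) forces $\alpha_i = -\alpha_i$, hence $\alpha_i = 0$, contradicting $\alpha_i \in \mathbb{Z}^{*}$.

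The remaining and most delicate subcase is $s+1-i = j$, i.e.\ $i+j = s+1$. Here symmetry of $\alpha^i$ at position $i$ yields $\alpha_i * 1 = -\alpha_j$, and symmetry of $\alpha^j$ at position $j$ yields $\alpha_j * 1 = -\alpha_i$. I would finish by a short case analysis on the signs of $\alpha_i$ and $\alpha_j$, using that $\alpha * 1$ equals $\alpha-1$ when $\alpha>0$ and $\alpha+1$ when $\alpha<0$: the two same-sign configurations would require $|\alpha_i|<1$ and are thus ruled out, while the two opposite-sign configurations, after substitution, reduce to $1 = -1$.

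The main obstacle should be the sign bookkeeping in this last subcase, since the preceding two subcases are immediate from the definition of the star operation; all the subtlety lies in eliminating the borderline situation in which the two modifications take place at mirror-paired positions, where no single equation is contradictory on its own but the combination is.
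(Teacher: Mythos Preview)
Your proof is correct and follows essentially the same contradiction argument as the paper: compare the symmetry conditions for $\alpha^i$ and $\alpha^j$ at position $i$ (and its mirror), splitting according to whether the mirror index $s+1-i$ coincides with $j$. The only cosmetic difference is that the paper first observes that $\alpha^i$ symmetric forces $s$ to be even (since consecutive entries of $\alpha\in\mathbb{K}_s$ alternate in sign), which lets it skip your subcase $s+1-i=i$ and, in the $i+j=s+1$ subcase, restrict immediately to opposite signs for $\alpha_i,\alpha_j$; you instead dispose of these possibilities directly, which is equally valid and arguably more self-contained.
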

\begin{proof}
Since $\alpha^i$ is symmetric, then $\alpha^i=-\overline{\alpha^i}$, hence $(\alpha_1,\dots,\alpha_i*1,\dots,\alpha_s)=(-\alpha_s,\dots,-(\alpha_i*1),\dots,-\alpha_1)$. Let $-\alpha_t$ be the $i^{th}$ component of $-\overline{\alpha^i}$, so $\alpha_i*1=-\alpha_t$. Suppose that there exists $j\neq i$ such that $\alpha^j$ is symmetric, then $\alpha^j=-\overline{\alpha^j}$, that is, (if we suppose without loss of generality that $i<j$), we have that $(\alpha_1,\dots,\alpha_i,\dots, \alpha_j*1,\dots,\alpha_s)=(-\alpha_s,\dots,-(\alpha_j*1),\dots,-\alpha_i,\dots,-\alpha_1)$. We have two cases to consider: If $\alpha_j$ is not on the $(s+1-i)^{th}$ position of $\alpha$, then the $i^{th}$ component of $-\overline{\alpha^j}$ is $-\alpha_t$. Thus, $\alpha_i=-\alpha_t$, a contradiction. If $\alpha_j$ is on the $(s+1-i)^{th}$ position of $\alpha$, (which means that $\alpha_i$ and $\alpha_j$ are of opposite signs because $\alpha$ has an even number of components since $\alpha^i$ is symmetric), then $-\alpha_j$ is on the $i^{th}$ position of $-\overline{\alpha^i}$ (which implies that $\alpha_i*1=-\alpha_j$ that is, if we suppose w.l.o.g. $\alpha_i>0$, $\alpha_i+1=-\alpha_j$ so $\alpha_i=-\alpha_j-1$), and $-(\alpha_j*1)$ is on the $i^{th}$ position of $-\overline{\alpha^j}$ (which implies that $\alpha_i=-(\alpha_j*1)$ i.e. $\alpha_i=-(\alpha_j-1)$ so $\alpha_i=-\alpha_j+1$), and we get a contradiction.
\end{proof}
%Remark that, since the sets of every type of Hamiltonian cycles form a partition of the set of all oriented Hamiltonian cycles in $T$)
\noindent We may now prove Theorem \ref{MAIN}:
\begin{proof}
We have:
\begin{eqnarray*}
\mathcal{F}(\alpha)= &&\mathcal{F}(\alpha_1,\alpha_2,\dots,\alpha_s) \\ = && \mathcal{F}(\alpha_1*1,\alpha_2,\dots,\alpha_s)+\mathcal{F}(\alpha_1,\alpha_2*1,\dots,\alpha_s)+\dots +\mathcal{F}(\alpha_1,\alpha_2,\dots,\alpha_s*1) \\ 
= && \mathcal{F}(\alpha^1)+\mathcal{F}(\alpha^2)+\dots +\mathcal{F}(\alpha^s).
\end{eqnarray*}
We consider 2 cases:
\begin{itemize}
\item The tuple $\alpha$ is symmetric (which implies that $s$ is even). \\
Thus obviously, all $\alpha^i$ are non-symmetric. \\
\linebreak
The proof will be done by induction on the order of the tournament $TT_n$.
The smallest case of symmetric $\alpha$ is $\alpha=(1,-1)$ which corresponds to a transitive tournament of order $3$ (acyclic triangle). In this tournament, $f_{TT_3}(1,-1)=1$ while $\mathcal{F}(1,-1)=\mathcal{F}(0,-1)+\mathcal{F}(1,0)=1+1=2$. So $f_{TT_3}(1,-1)=\frac{1}{2}\mathcal{F}(1,-1)$.\\
Suppose that the statement is true for transitive tournaments of order $l\leq n-1$, and let's prove it for $n$.\\
By Lemma \ref{L}, we have that $\alpha^i=-\overline{\alpha^j}\iff i+j=s+1$. As a result, by Proposition \ref{PP'}, if we consider the transitive tournament $\widetilde{T}=TT_n - \lbrace v \rbrace$ where $v$ is the source of $TT_n$, we have $\mathcal{P}_{\widetilde{T}}(\alpha^1)=\mathcal{P}_{\widetilde{T}}(\alpha^s)$,  $\mathcal{P}_{\widetilde{T}}(\alpha^2)=\mathcal{P}_{\widetilde{T}}(\alpha^{s-1})$, $\dots$, $\mathcal{P}_{\widetilde{T}}(\alpha^{\frac{s}{2}})=\mathcal{P}_{\widetilde{T}}(\alpha^{\frac{s}{2}+1})$, and the sets $\mathcal{P}_{\widetilde{T}}(\alpha^1)$, $\mathcal{P}_{\widetilde{T}}(\alpha^2)$, $\dots$ , $\mathcal{P}_{\widetilde{T}}(\alpha^{\frac{s}{2}})$ are pairwise different. \\
Moreover, each path of type $P(\alpha^i)$ for some $1\leq i \leq s$ is a Hamiltonian path in $\widetilde{T}$, and since all the sets of oriented Hamiltonian paths of a given type in a tournament $T$ form a partition of the set of all oriented Hamiltonian paths in $T$, then if $\mathcal{P}_{\widetilde{T}}(\alpha^i)\neq\mathcal{P}_{\widetilde{T}}(\alpha^j)$, we have $\mathcal{P}_{\widetilde{T}}(\alpha^i)\cap\mathcal{P}_{\widetilde{T}}(\alpha^j)=\emptyset$. As a result, $\mathcal{P}_{\widetilde{T}}(\alpha^1),\mathcal{P}_{\widetilde{T}}(\alpha^2),\dots,\mathcal{P}_{\widetilde{T}}(\alpha^{\frac{s}{2}})$ are all pairwise disjoint, and so are $\mathcal{P}_{\widetilde{T}}(\alpha^{\frac{s}{2}+1})$, $\mathcal{P}_{\widetilde{T}}(\alpha^{\frac{s}{2}+2})$, $\dots,\mathcal{P}_{\widetilde{T}}(\alpha^s)$.\\
\linebreak
Consider the correspondence:
\begin{eqnarray*}
% \nonumber % Remove numbering (before each equation)
  g: & & \mathcal{P}_{TT_n}(\alpha) \longrightarrow \cup_{i=1}^{\frac{s}{2}}\mathcal{P}_{\widetilde{T}}(\alpha^i) \\
   & & P \longrightarrow g(P)=P-\langle v\rangle \cup \langle \lbrace x,y \rbrace \rangle,
\end{eqnarray*}
where $x$ is the predecessor of $v$ on $P$, and $y$ its successor on $P$ if any.\\
\linebreak
Clearly, $g$ is well defined. In fact, Let $P \in \mathcal{P}_{TT_n}(\alpha_1,\alpha_2,\dots,\alpha_s)$, $P=I_1I_2\dots I_s$, and suppose that the source $v$ is the origin of some block $I_i$ of $P$ of length $\alpha_i$ ($\alpha_i>0$ since $v$ is a source), and let $x \in I_{i-1}$ be the predecessor of $v$ on $P$ and $y \in I_i$ its successor. If $(y,x)\in E(TT_n)$, then $g(P)$ is of type $P(\alpha_1,\dots,\alpha_{i-1},\alpha_i-1,\dots,\alpha_s)$, and if $(x,y)\in E(TT_n)$, then $g(P)$ is of type $P(\alpha_1,\dots,\alpha_{i-1}+1,\alpha_i,\dots,\alpha_s)$, and both of them belong to $\cup_{i=1}^{\frac{s}{2}}\mathcal{P}_{\widetilde{T}}(\alpha^i)$, (if $g(P) \in \mathcal{P}_{\widetilde{T}}(\alpha^i)$ for some $\frac{s}{2}+1 \leq i \leq s$, then as previously mentioned, it belongs to a set $\mathcal{P}_{\widetilde{T}}(\alpha^i)$ for some $1 \leq i \leq \frac{s}{2}$ and so it belongs to $\cup_{i=1}^{\frac{s}{2}}\mathcal{P}_{\widetilde{T}}(\alpha^i)$). Moreover, it is obvious that $g$ is a mapping.\\
\linebreak
\noindent The mapping $g$ is a bijection: \\
It is surjective: Let $P'\in \cup_{i=1}^{\frac{s}{2}}\mathcal{P}_{\widetilde{T}}(\alpha^i)$, then $\exists$ $1\leq i \leq \frac{s}{2}$ such that $P' \in \mathcal{P}_{\widetilde{T}}(\alpha^i)$. Suppose that $\alpha_i>0$, then $P' \in \mathcal{P}_{\widetilde{T}}(\alpha_1,\dots,\alpha_{i-1},\alpha_i-1,\alpha_{i+1},\dots,\alpha_s)$ and let $x$ be the origin of the block $I_i$ of length $\alpha_i-1$, and $y \in I_{i-1}$ its predecessor, and write $P'=P_1\cup (x,y)\cup P_2$. Since $v$ is a source, then $(v,y)$ and $(v,x)$ $\in E(TT_n)$, hence $P=P_1\cup (v,y)\cup (v,x)\cup P_2$ is of type $P(\alpha_1,\alpha_2,\dots,\alpha_s)$, so it belongs to  $\mathcal{P}_{TT_n}(\alpha)$ with $g(P)=P'$. The case $\alpha_i<0$ is similar.\\
Also, $g$ is injective: Let $P$ and $P'$ be two paths in $TT_n$ of type $P(\alpha)$, such that $g(P)=g(P')$ $\in \mathcal{P}_{\widetilde{T}}(\alpha^i)=\mathcal{P}_{\widetilde{T}}(\alpha_1,\dots,\alpha_{i-1},\alpha_i*1,\alpha_{i+1},\dots,\alpha_s)$ for some $1\leq i \leq \frac{s}{2}$. Suppose that $\alpha_i>0$. %Write $g(P)=I_1I_2\dots I_s$ and $g(P')=I'_1I'_2\dots I'_s$. Suppose that $\alpha_i>0$. %Since $g(P) \in \mathcal{P}_{\widetilde{T}}(\alpha^i)= \mathcal{P}_{\widetilde{T}}(\alpha_1,\dots,\alpha_{i-1},\alpha_i-1,\alpha_{i+1},\dots,\alpha_s)$ where $\alpha^i$ is non-symmetric, so there is only one enumeration of the vertices of $g(P)$ such that $g(P)$ is of type $P(\alpha^i)$ with respect to this enumeration.
So let $g(P)=I_1I_2\dots I_s=u_1u_2...u_ryx w_1w_2...w_t$ where $x$ is the origin of the block $I_i$ of length $\alpha_i-1$, and $y \in I_{i-1}$ its predecessor, and the arc $(x,y)$ replaced the arcs $(v,x)$ and $(v,y)$ in $P=u_1u_2...u_ryvx w_1w_2...w_t$. The path $g(P)=u_1u_2...u_ryx w_1w_2...w_t$ is of type $P(\alpha^i)$ with respect to this enumeration. %Similarly, since $g(P')\in \mathcal{P}_{\widetilde{T}}(\alpha^i)= \mathcal{P}_{\widetilde{T}}(\alpha_1,\dots,\alpha_{i-1},\alpha_i-1,\alpha_{i+1},\dots,\alpha_s)$ where $\alpha^i$ non-symmetric,
Also, let $g(P')=I'_1I'_2\dots I'_s$ $=u'_1u'_2...u'_ry'x' w'_1w'_2...w'_t$ where $x'$ is the origin of the block $I'_i$ of length $\alpha_i-1$, and $y \in I'_{i-1}$ its predecessor, and the arc $(x',y')$ replaced the arcs $(v,x')$ and $(v,y')$ in $P'=u'_1u'_2...u'_ry'vx' w'_1w'_2...w'_t$. The path $g(P')=u'_1u'_2...u'_ry'x' w'_1w'_2...w'_t$ is of type $P(\alpha^i)$ with respect to this enumeration. %Now, since $(\alpha^i)$ is not symmetric, then $g(P')$ can't be of type $P(\alpha^i)$ with respect to the enumeration $w'_tw'_{t-1}...w'_1x'y'u'_ru'_{r-1}...u'_1$ thus
Now, since $g(P)=g(P')$ then we either have $(u_1,u_2,...,u_r,y,x, w_1,w_2,...,w_t)=(u'_1,u'_2,...,u'_r,y',x', w'_1,w'_2,...,w'_t)$ or we have $(u_1,u_2,...,u_r,y,x, w_1,w_2,...,w_t)=$ \\ $(w'_t,w'_{t-1},...,w'_1,x',y', u'_r,u'_{r-1},...,u'_1)$. If the second case is true, then the path $g(P')=w'_tw'_{t-1}...w'_1x'y' u'_ru'_{r-1}...u'_1$ is of type $P(\alpha^i)$ with respect to this enumeration, which is impossible since $\alpha^i$ is non-symmetric. Thus, only the first case is true, and adding the arcs $(v,y)=(v,y')$ and $(v,x)=(v,x')$ we get $P=u_1u_2...u_ryvx w_1w_2...w_t=P'=u'_1u'_2...u'_ry'vx' w'_1w'_2...w'_t$. The case $\alpha_i<0$ is similar.\\
\linebreak
Now, since $g$ is a bijection, then $f_{TT_n}(\alpha)=\sum_{i=1}^{\frac{s}{2}}f_{\widetilde{T}}(\alpha^i)$. Since by induction we have $ f_{\widetilde{T}}(\alpha^i)=\mathcal{F}(\alpha^i)$ because the order of $\widetilde{T}$ is $n-1$ and all $\alpha^i$ are non-symmetric, then $$f_{TT_n}(\alpha)=\sum_{i=1}^{\frac{s}{2}}f_{\widetilde{T}}(\alpha^i)=\sum_{i=1}^{\frac{s}{2}}\mathcal{F}(\alpha^i)=\frac{1}{2}\sum_{i=1}^s \mathcal{F}(\alpha^i)=\frac{1}{2}\mathcal{F}(\alpha).$$
\smallskip
\item The tuple $\alpha$ is non-symmetric. \\
\linebreak
The proof will also be done by induction on the order of the tournament $TT_n$.
The smallest case of non-symmetric $\alpha$ is $\alpha=(2)$ (i.e. directed Hamiltonian paths) which corresponds also to the transitive tournament of order $3$ (acyclic triangle). In this tournament, $f_{TT_3}(2)=1$ and $\mathcal{F}(2)=1$ by the definition of the mapping $\mathcal{F}$. So $f_{TT_3}(2)=\mathcal{F}(2)$.\\
Suppose that the statement is true for transitive tournaments of order $l\leq n-1$, and let's prove it for $n$.\\
Since $\alpha$ is non-symmetric, we have by Lemma \ref{L} that $\alpha^i\neq -\overline{\alpha^j}$ $\forall$ $1\leq i,j\leq s$. As a result, if we consider the transitive tournament $\widetilde{T}=TT_n - \lbrace v \rbrace$ where $v$ is the source of $TT_n$, then also by Proposition \ref{PP'}, $\mathcal{P}_{\widetilde{T}}(\alpha^i)\neq \mathcal{P}_{\widetilde{T}}(\alpha^j)$ $\forall$ $1\leq i,j\leq s$, so we have $\mathcal{P}_{\widetilde{T}}(\alpha^i)\cap\mathcal{P}_{\widetilde{T}}(\alpha^j)=\emptyset$ $\forall$ $1\leq i,j\leq s$, hence $\mathcal{P}_{\widetilde{T}}(\alpha^1),\mathcal{P}_{\widetilde{T}}(\alpha^2),\dots,\mathcal{P}_{\widetilde{T}}(\alpha^s)$ are all pairwise disjoint. \\
\linebreak
Consider the correspondence:
\begin{eqnarray*}
% \nonumber % Remove numbering (before each equation)
  g': & & \mathcal{P}_{TT_n}(\alpha) \longrightarrow \cup_{i=1}^{s}\mathcal{P}_{\widetilde{T}}(\alpha^i) \\
   & & P \longrightarrow g'(P)=P-\langle v\rangle \cup \langle \lbrace x,y \rbrace \rangle,
\end{eqnarray*}
where $v$ is the source of $TT_n$, $x$ is the predecessor of $v$ on $P$, and $y$ its successor on $P$ if any.\\
\linebreak
As in the previous case, we can prove that $g'$ is a surjective mapping. However, $g'$ is not always injective. In fact, let $P'\in \cup_{i=1}^{s}\mathcal{P}_{\widetilde{T}}(\alpha^i)$, and let's find how many $P\in \mathcal{P}_{TT_n}(\alpha)$ there exist, such that $g'(P)=P'$. We consider two cases:\\
\begin{enumerate}
\item All $\alpha^i$ are non-symmetric. \\
Then following the same arguments as given in the first case to prove that $g'$ is injective, we may prove that $P'$ has only one antecedent in $\mathcal{P}_{TT_n}(\alpha)$ thus $g'$ is injective. So $g'$ is a bijection.\\
Now, since $g'$ is a bijection, then $f_{TT_n}(\alpha)=\sum_{i=1}^{s}f_{\widetilde{T}}(\alpha^i)$. And since by induction we have $ f_{\widetilde{T}}(\alpha^i)=\mathcal{F}(\alpha^i)$ (the order of $\widetilde{T}$ is $n-1$ and all $\alpha^i$ are non-symmetric), then $$f_{TT_n}(\alpha)=\sum_{i=1}^{s}f_{\widetilde{T}}(\alpha^i)=\sum_{i=1}^{s}\mathcal{F}(\alpha^i)=\mathcal{F}(\alpha).$$
\smallskip
\item There exists $1\leq i_0 \leq s$ such that $\alpha^{i_0}$ is symmetric.\\
Then by Lemma \ref{LL}, all $\alpha^j$, $1\leq j\neq i_0 \leq s$, are non-symmetric.\\
\linebreak
Since $P'\in \cup_{i=1}^{s}\mathcal{P}_{\widetilde{T}}(\alpha^i)$, then $\exists$ $1\leq i \leq s$ such that $P' \in \mathcal{P}_{\widetilde{T}}(\alpha^i)$. Suppose $\alpha_i>0$, then $P' \in \mathcal{P}_{\widetilde{T}}(\alpha_1,\dots,\alpha_{i-1},\alpha_i-1,\alpha_{i+1},\dots,\alpha_s)$. (The case $\alpha_i<0$ is similar).\\
Write $P'=I_1I_2\dots I_s=u_1u_2...u_ryx w_1w_2...w_t$ where $x$ is the origin of the block $I_i$ of length $\alpha_i-1$, and $y \in I_{i-1}$ its predecessor.\\
If $i=i_0$, then $\alpha^i$ is symmetric, then $P'$ is also of type $P(\alpha^i)$ with respect to the other enumeration: $w_tw_{t-1}...w_1xyu_ru_{r-1}...u_1$ and we rewrite $P'$ with respect to this enumeration as $P'=I'_1I'_2\dots I'_s=z_1z_2...z_ry'x'v_1v_2...v_t$ where $x'$ is the origin of the block $I'_i$ of length $\alpha_i-1$, and $y' \in I'_{i-1}$ its predecessor. So now, we consider the two paths $P_1=u_1u_2...u_ryvx w_1w_2...w_t$ where the arc $(x,y)$ is replaced by the arcs $(v,x)$ and $(v,y)$, and $P_2=z_1z_2...z_ry'vx'v_1v_2...v_t$ where the arc $(x',y')$ is replaced by the arcs $(v,x')$ and $(v,y')$. They are distinct and both of type $P(\alpha)$, and they are the only ones such that $g'(P_1)=g'(P_2)=P'$.\\
If $i\neq i_0$, then $\alpha^i$ is non-symmetric, thus $P'$ can't be of type $P(\alpha^i)$ with respect to the other enumeration: $w_tw_{t-1}...w_1xyu_ru_{r-1}...u_1$. So there is only one path $P=u_1u_2...u_ryvx w_1w_2...w_t$ where the arc $(x,y)$ in $P'$ is replaced by the arcs $(v,x)$ and $(v,y)$, that is of type $P(\alpha)$, and such that $g'(P)=P'$.\\
\linebreak
A a result, we have: $f_{TT_n}(\alpha)=2.f_{\widetilde{T}}(\alpha^{i_0})+\sum_{i\neq i_0,i=1}^s f_{\widetilde{T}}(\alpha^i)$. Now, by induction, $f_{\widetilde{T}}(\alpha^{i_0})=\frac{1}{2}\mathcal{F}(\alpha^{i_0})$, because $\alpha^{i_0}$ is symmetric, and $\forall$ $i\neq i_0$, $f_{\widetilde{T}}(\alpha^{i})=\mathcal{F}(\alpha^{i})$, because $\alpha^i$ is non-symmetric, thus $$f_{TT_n}(\alpha)=\sum_{i=1}^s \mathcal{F}(\alpha^i)=\mathcal{F}(\alpha),$$
and this concludes the proof.
\end{enumerate}
\end{itemize}
\end{proof}

\section{Some properties of $\mathcal{F}$}

In this section, we will study the path-function $\mathcal{F}$. As we previously mentioned, in a transitive tournament $TT_n$ we have $f_{TT_n}(\alpha)=f_{TT_n}(-\alpha)$ $\forall$ $\alpha \in \mathbb{K}_s$. For that reason, we will redefine the mapping $f$ by removing the signs of the components of the tuples.\\
\linebreak
\noindent Let $\mathcal{N}=\lbrace (\alpha_1,\dots,\alpha_s)\in(\mathbb{N}^*)^s,s\in \mathbb{N}^*\rbrace$, $\mathcal{F}$ is defined by:
\begin{eqnarray*}
% \nonumber % Remove numbering (before each equation)
  \mathcal{F}: & & \mathcal{N} \longrightarrow \mathbb{N} \\
   & & (\alpha_1,\alpha_2,\dots,\alpha_s)\longrightarrow \mathcal{F}(\alpha_1,\alpha_2,\dots,\alpha_s),
\end{eqnarray*}
\noindent under the recurrence relation:
\begin{eqnarray*}
% \nonumber % Remove numbering (before each equation)
  \mathcal{F}(\alpha_1,\alpha_2,\dots,\alpha_s)=& & \mathcal{F}(\alpha_1-1,\alpha_2,\dots,\alpha_s)+\mathcal{F}(\alpha_1,\alpha_2-1,\dots,\alpha_s) \\
  &&+\dots +\mathcal{F}(\alpha_1,\alpha_2,\dots,\alpha_s-1)
\end{eqnarray*}
\noindent satisfying the properties: \begin{enumerate}
\item $\forall$ $t$ $\in \mathbb{N}^*$, $\mathcal{F}(0,\alpha_2,\dots,\alpha_t)=\mathcal{F}(\alpha_2,\dots,\alpha_t)$,
\item $\forall$ $t'$ $\in \mathbb{N}^*$, $\mathcal{F}(\alpha_1,\alpha_2,\dots,\alpha_{t'},0)=\mathcal{F}(\alpha_1,\alpha_2,\dots,\alpha_{t'})$,
\item $\forall$ $r$ $\in \mathbb{N}^*$, $\mathcal{F}(\alpha_1,\dots,\alpha_r,0,\alpha_{r+2},\dots,\alpha_s)=\mathcal{F}(\alpha_1,\dots,\alpha_r+\alpha_{r+2},\dots,\alpha_s)$,
\item $\forall$ $\alpha$ $\in \mathbb{N}^*$, $\mathcal{F}(\alpha)=1$.
\end{enumerate}
\smallskip
\begin{remark}
With this definition of $\mathcal{F}$, $\forall$ $\alpha=(\alpha_1,\dots,\alpha_s) \in \mathbb{K}_s$, the number computed now by $\mathcal{F}$ is $\mathcal{F}(|\alpha_1|,|\alpha_2|,\dots,|\alpha_s|)$, which will be either $f_{TT_n}(\alpha)$ or $f_{TT_n}(-\alpha)$ if $\alpha$ is non symmetric (resp. either $\frac{f_{TT_n}(\alpha)}{2}$ or $\frac{f_{TT_n}(-\alpha)}{2}$ if $\alpha$ is symmetric). \\
Note that this does not exclude the fact that the sets $\mathcal{P}_{TT_n}(\alpha)$ and $\mathcal{P}_{TT_n}(-\alpha)$ can be either disjoint or the same.
\end{remark}

\noindent Obviously, $\mathcal{F}(a_1,a_2,\dots,a_s)=\mathcal{F}(a_s,a_{s-1},\dots,a_1)$, $\forall \ \alpha=(\alpha_1,\dots,\alpha_s)\in \mathcal{N}$, since $\mathcal{F}$ is a symmetric function. We also have the following properties:
\begin{remark}\label{1,s block}
For all $\alpha=(\alpha_1,\alpha_2,\dots,\alpha_s)\in \mathcal{N}$, $s\geq 2$, and for all $m\in \mathbb{N}^*$ we have: $$\mathcal{F}(m) < \mathcal{F}(\alpha_1,\alpha_2,\dots,\alpha_s),$$
\end{remark}
\noindent Since by the definition of $\mathcal{F}$ we have $\mathcal{F}(m)=1$ and $\mathcal{F}(\alpha_1,\dots,\alpha_s)>1$ for $s>1$.
\begin{proposition}\label{2 block}
For all $m,n,m',n'\in \mathbb{N}^*$, $m+n=m'+n'$, we have:
\begin{itemize}
    \item $\mathcal{F}(m,n)=\binom{m+n}{m}$,
    \item $\mathcal{F}(m,n)<\mathcal{F}(m',n')\iff mn<m'n'$.
\end{itemize}
\end{proposition}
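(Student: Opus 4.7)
For the first equality, the plan is to use the defining recurrence directly. Restricting the general recurrence to two-component tuples gives
\[
\mathcal{F}(m,n)=\mathcal{F}(m-1,n)+\mathcal{F}(m,n-1)
\]
for all $m,n\geq 1$, while the conventions (1), (2), (4) in the definition of $\mathcal{F}$ yield the boundary values $\mathcal{F}(m,0)=\mathcal{F}(m)=1$ and $\mathcal{F}(0,n)=\mathcal{F}(n)=1$. This is exactly Pascal's triangle recurrence with boundary values equal to $1$, so an induction on $m+n$ combined with the identity $\binom{m+n}{m}=\binom{m+n-1}{m-1}+\binom{m+n-1}{m}$ delivers $\mathcal{F}(m,n)=\binom{m+n}{m}$. (An alternative derivation, which I would mention only as a sanity check, combines Proposition \ref{combination}, Remark \ref{combination2} and Theorem \ref{MAIN}: for $m\neq n$ the tuple is non-symmetric so $\mathcal{F}(m,n)=f_{TT_{m+n+1}}(m,-n)=\binom{m+n}{m}$, while for $m=n$ the symmetric case absorbs the extra factor of $2$.)

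For the second bullet, write $s:=m+n=m'+n'$. Using the first part, the claim becomes
\[
\binom{s}{m}<\binom{s}{m'}\iff m(s-m)<m'(s-m').
\]
The plan is to exploit that both maps $k\mapsto\binom{s}{k}$ and $k\mapsto k(s-k)$ are symmetric about $k=s/2$ and strictly unimodal. Symmetry is immediate for the product, and for the binomial follows from $\binom{s}{k}=\binom{s}{s-k}$. Strict monotonicity of $\binom{s}{k}$ on $\{0,1,\dots,\lfloor s/2\rfloor\}$ is read off from the ratio $\binom{s}{k}/\binom{s}{k-1}=(s-k+1)/k$, which exceeds $1$ exactly when $k\leq s/2$; strict monotonicity of $k(s-k)$ on the same range is obvious.

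Now both quantities being compared are invariant under $k\mapsto s-k$, so by replacing $m$ with $s-m=n$ and $m'$ with $s-m'=n'$ if necessary, I may assume $m\leq s/2$ and $m'\leq s/2$. On this range both functions are strictly increasing, so both of the inequalities $\binom{s}{m}<\binom{s}{m'}$ and $m(s-m)<m'(s-m')$ are equivalent to $m<m'$, which proves the equivalence.

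Neither part is truly difficult: the only point that requires care is the reduction step in the second bullet, where one must observe that both sides of the equivalence are symmetric under the involution $k\mapsto s-k$ so that the WLOG assumption $m,m'\leq s/2$ is genuinely free. This is the place where a careless proof could go astray by comparing values on opposite sides of the peak of the binomial.
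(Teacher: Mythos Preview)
Your proof is correct. For the first bullet, your primary route differs from the paper's: you derive $\mathcal{F}(m,n)=\binom{m+n}{m}$ directly from the defining recurrence and the boundary conventions, which is exactly Pascal's rule plus an induction on $m+n$. The paper instead obtains the formula as a corollary of the combinatorial interpretation, invoking Proposition~\ref{combination}, Remark~\ref{combination2}, and the main Theorem~\ref{MAIN}. Your argument is more elementary and self-contained (it does not depend on Theorem~\ref{MAIN} at all), while the paper's approach emphasizes that $\mathcal{F}$ really is counting paths in $TT_n$; amusingly, you mention the paper's method as your ``sanity check.'' For the second bullet, both proofs reduce the claim to the standard unimodality/symmetry of $k\mapsto\binom{s}{k}$; the paper simply cites this as known, whereas you spell out the ratio test and the WLOG reduction to $m,m'\leq s/2$. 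The care you take with that reduction is warranted and the argument is clean.
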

\begin{proof}
The first statement is a direct result of Proposition \ref{combination}, Remark \ref{combination2} and Theorem \ref{MAIN}. For the second statement, we know that for $ m,n,m',n' \in \mathbb{N}^*$ such that $m+n=m'+n'=p$ and $mn<m'n'$ we have $\binom{p}{m}=\binom{p}{n}< \binom{p}{m'}=\binom{p}{n'}$. This is equivalent to $\mathcal{F}(m,n)<\mathcal{F}(m',n')$, since $\mathcal{F}(m,n)=\binom{m+n}{m}$.
\end{proof}
\smallskip
\begin{proposition}\label{2,3 block}
For all $m,n,t,a\in \mathbb{N}^*$, $a+m+n=a+t$, we have: $$\mathcal{F}(a,t) < \mathcal{F}(a,m,n).$$
\end{proposition}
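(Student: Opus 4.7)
The first step is to invoke Proposition \ref{2 block}, which identifies $\mathcal{F}(a,t)=\binom{a+t}{a}$; this reduces the claim to $\mathcal{F}(a,m,n)>\binom{a+m+n}{a}$ for all $a,m,n\in\mathbb{N}^*$. My approach will be strong induction on $N:=a+m+n$. The base case $a=m=n=1$ is a one-line check via the recurrence:
\[
\mathcal{F}(1,1,1)=\mathcal{F}(0,1,1)+\mathcal{F}(1,0,1)+\mathcal{F}(1,1,0)=2+1+2=5>3=\binom{3}{1}.
\]

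For the inductive step, I will apply the defining recurrence
\[
\mathcal{F}(a,m,n)=\mathcal{F}(a-1,m,n)+\mathcal{F}(a,m-1,n)+\mathcal{F}(a,m,n-1)
\]
in parallel with Pascal's identity $\binom{a+m+n}{a}=\binom{a+m+n-1}{a-1}+\binom{a+m+n-1}{a}$. In the ``generic'' regime $a,m,n\geq 2$ the inductive hypothesis applied to each summand gives $\mathcal{F}(a-1,m,n)>\binom{a+m+n-1}{a-1}$ and $\mathcal{F}(a,m-1,n),\ \mathcal{F}(a,m,n-1)>\binom{a+m+n-1}{a}$; summing already produces the desired strict inequality, with one of the last two terms providing the strict slack.

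The hard part will be the boundary cases where some of $a,m,n$ equal $1$ and the corresponding recurrence term collapses. When $m=1$ (and symmetrically when $n=1$), property~(3) forces $\mathcal{F}(a,0,n)=\mathcal{F}(a+n)=1$, which is far smaller than the binomial coefficient it was supposed to majorise. The key idea will be to absorb this shortage through the third summand: when $n\geq 2$, the inductive hypothesis yields $\mathcal{F}(a,1,n-1)>\binom{a+n}{a}$, and the excess over this bound is more than enough to compensate for the deficit of $\mathcal{F}(a,0,n)$. When $a=1$, no issue arises from the first summand: $\mathcal{F}(0,m,n)=\binom{m+n}{m}\geq 2$ already strictly dominates $\binom{m+n}{0}=1$ without invoking any inductive hypothesis.

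Finally, the finitely many configurations where two of $a,m,n$ simultaneously equal $1$, so that several terms hit boundary cases, will be dispatched by direct arithmetic using the recurrence and the identity $\mathcal{F}(k,1)=k+1$. For example, when $m=n=1$ and $a\geq 2$, the inductive hypothesis $\mathcal{F}(a-1,1,1)>\binom{a+1}{2}$ combined with the boundary evaluations $\mathcal{F}(a,0,1)=1$ and $\mathcal{F}(a,1,0)=a+1$ yields
\[
\mathcal{F}(a,1,1)>\binom{a+1}{2}+a+2=\binom{a+2}{2}+1>\binom{a+2}{a},
\]
and the analogous small corners ($a=1$ paired with $m=1$ or $n=1$) collapse into a short chain of boundary values terminating at the base case.
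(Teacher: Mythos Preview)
Your argument is correct. The paper, however, takes a different and somewhat more streamlined route: instead of converting $\mathcal{F}(a,t)$ into the binomial $\binom{a+t}{a}$ via Proposition~\ref{2 block} and then proving the numerical lower bound $\mathcal{F}(a,m,n)>\binom{a+m+n}{a}$, it compares the two $\mathcal{F}$-values directly by expanding \emph{both} sides via the recurrence. Writing
\[
\mathcal{F}(a,t)=\mathcal{F}(a-1,t)+\mathcal{F}(a,t-1),\qquad
\mathcal{F}(a,m,n)=\mathcal{F}(a-1,m,n)+\mathcal{F}(a,m-1,n)+\mathcal{F}(a,m,n-1),
\]
the paper pairs $\mathcal{F}(a-1,t)$ with $\mathcal{F}(a-1,m,n)$ (by induction, or by Remark~\ref{1,s block} when $a=1$), pairs $\mathcal{F}(a,t-1)$ with whichever of $\mathcal{F}(a,m-1,n)$, $\mathcal{F}(a,m,n-1)$ is still a genuine three-block term (again by induction, since $t-1=(m-1)+n=m+(n-1)$), and simply discards the leftover positive summand. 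This stays entirely inside the $\mathcal{F}$-recurrence, never invokes Pascal's identity or any binomial arithmetic, and leaves only the single boundary case $m=n=1$ to be handled in one line. Your approach works just as well but trades that economy for more boundary cases and explicit computation; on the other hand it makes the closed-form inequality $\mathcal{F}(a,m,n)>\binom{a+m+n}{a}$ visible. Incidentally, your ``deficit/compensation'' language in the $m=1$ case is not really needed: Pascal's identity only supplies \emph{one} summand $\binom{N-1}{a}$ to be dominated, and $\mathcal{F}(a,1,n-1)$ alone already does that by the inductive hypothesis, so $\mathcal{F}(a,0,n)=1$ is pure surplus rather than a shortfall to be absorbed.
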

\begin{proof}
The proof will be done by induction on $p=a+t=a+m+n$, that is by induction on the order $p+1$ of a transitive tournament. \\
The smallest tournament in which we can compare $\mathcal{F}(a,t)$ and $\mathcal{F}(a,m,n)$ has $4$ vertices, that is $p=3$, with $a=1$, $t=2$, and $m=n=1$.\\
We have $\mathcal{F}(1,2)=\mathcal{F}(2)+\mathcal{F}(1,1)$, while $\mathcal{F}(1,1,1)=\mathcal{F}(1,1)+\mathcal{F}(2)+\mathcal{F}(1,1)$ and the inequality follows.\\
Suppose that the inequality is true for a tournament of order less than or equal to $p$, and let $T$ be a tournament of order $p+1$, with $a+t=a+m+n=p$.\\
Note that we necessarily have $m$ and $n <t$.\\
We have $\mathcal{F}(a,t)=\mathcal{F}(a-1,t)+\mathcal{F}(a,t-1)$, and $\mathcal{F}(a,m,n)=\mathcal{F}(a-1,m,n)+\mathcal{F}(a,m-1,n)+\mathcal{F}(a,m,n-1)$.\\
First, we have $\mathcal{F}(a-1,t)<\mathcal{F}(a-1,m,n)$. In fact, if $a-1\neq 0$, it is true by induction, while if $a-1=0$, it follows from Remark \ref{1,s block} that $\mathcal{F}(t)<\mathcal{F}(m,n)$.\\
Now, if $m>1$ (resp. $n>1$), we have by induction that $\mathcal{F}(a,t-1) < \mathcal{F}(a,m-1,n)$ (resp. $\mathcal{F}(a,t-1) < \mathcal{F}(a,m,n-1))$, and the inequality follows.\\
If $m=n=1$ (which implies $t=2$), then $\mathcal{F}(a,t-1)=\mathcal{F}(a,1) <\mathcal{F}(a,m-1,n)+\mathcal{F}(a,m,n-1)=\mathcal{F}(a+1)+\mathcal{F}(a,1)$, and we get the desired inequality.
\end{proof}
\smallskip
\begin{proposition}\label{3 block}
For all $m,n,m',n',a\in \mathbb{N}^*$, $a+m+n=a+m'+n'$, we have:
\begin{itemize}
\item $\mathcal{F}(a,m,n)<\mathcal{F}(a,m',n')\iff (mn<m'n') \ or \ (mn=m'n' , \ m<n)$.
\item $\mathcal{F}(m,a,n)<\mathcal{F}(m',a,n')\iff (mn<m'n')$.
\end{itemize}
\end{proposition}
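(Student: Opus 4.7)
The plan is to prove both items by strong induction on $p=a+m+n=a+m'+n'$, using the defining recurrence
\[
\mathcal{F}(x_1,x_2,x_3) = \mathcal{F}(x_1-1,x_2,x_3)+\mathcal{F}(x_1,x_2-1,x_3)+\mathcal{F}(x_1,x_2,x_3-1)
\]
to decompose each side of the desired inequality into three summands of weight $p-1$, and then to compare termwise. Base cases are verified directly; when a coordinate drops to $0$, rules (1)--(3) return a $2$-block or smaller value, handled via Proposition~\ref{2 block}, Proposition~\ref{2,3 block}, and Remark~\ref{1,s block}. Since the orderings on both sides are trichotomic, it suffices to show that each clause of the product-based condition implies the corresponding strict inequality for $\mathcal{F}$.

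The arithmetic engine is the pair of identities, valid whenever $m+n=m'+n'$:
\begin{align*}
(m'-1)n'-(m-1)n &= (m'n'-mn)+(n-n'),\\
m'(n'-1)-m(n-1) &= (m'n'-mn)-(m'-m).
\end{align*}
Reading off the signs of $m'n'-mn$ and $m'-m$ tells me immediately whether each of the three pairs of recurrence terms represents an increase, a decrease, or an equality, and these then lift to a comparison of $\mathcal{F}$-values by induction. For Part (b) this alone suffices: the reversal symmetry $\mathcal{F}(m,a,n)=\mathcal{F}(n,a,m)$ absorbs the would-be tie-breaker (when $m+n=m'+n'$ and $mn=m'n'$ with $(m,n)\neq(m',n')$, necessarily $(m',n')=(n,m)$ and the two values coincide), so only the case $mn<m'n'$ requires argument, and the three recurrence terms then all move strictly in the same direction.

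Part (a) is more delicate because the function is not symmetric in $(m,n)$ when $a$ sits before them. The main obstacle is the tie-breaker subcase $mn=m'n'$ with $m<n$, where $(m',n')=(n,m)$ and one must show $\mathcal{F}(a,m,n)<\mathcal{F}(a,n,m)$. The recurrence expansion gives
\[
\mathcal{F}(a,n,m)-\mathcal{F}(a,m,n)=A+B-C,
\]
with $A=\mathcal{F}(a-1,n,m)-\mathcal{F}(a-1,m,n)\ge 0$ (positive for $a>1$ by induction, zero for $a=1$ since Proposition~\ref{2 block} on equal products gives equality), $B=\mathcal{F}(a,n-1,m)-\mathcal{F}(a,m-1,n)>0$, and $C=\mathcal{F}(a,m,n-1)-\mathcal{F}(a,n,m-1)>0$. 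Because $C$ carries a minus sign, the residual task is the auxiliary inequality
\[
\mathcal{F}(a,n-1,m)+\mathcal{F}(a,n,m-1) \;>\; \mathcal{F}(a,m-1,n)+\mathcal{F}(a,m,n-1) \quad (m<n),
\]
which I would prove by a companion induction, pairing the four terms using the reversal identity $\mathcal{F}(x_1,x_2,x_3)=\mathcal{F}(x_3,x_2,x_1)$ and reducing to Proposition~\ref{2 block} at the base. Once this residual inequality is in hand, the tie-breaker subcase of Part (a) closes, and the remaining subcases of Part (a) run parallel to Part (b).
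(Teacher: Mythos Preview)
Your overall framework---induction on $p$, expand by the recurrence, compare termwise, fall back on Propositions~\ref{2 block} and~\ref{2,3 block} at the boundaries---matches the paper. Part~(b) is essentially right (your claim that ``all three terms move strictly'' is a slight overstatement: when $a=1$ the middle pair collapses to $\mathcal{F}(m+n)=\mathcal{F}(m'+n')=1$, and when $m'=n'=m+1,\ n=m+2$ the third pair ties; but at least one pair is always strict, so the conclusion survives).

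The real gap is in Part~(a), the tie-breaker subcase $mn=m'n'$, $m<n$, i.e.\ proving $\mathcal{F}(a,m,n)<\mathcal{F}(a,n,m)$. Your pairing (decrement position~$i$ on the left versus decrement position~$i$ on the right) produces the wrong-signed term $-C$, and you defer the resulting ``auxiliary inequality'' to a companion induction using the reversal identity $\mathcal{F}(x_1,x_2,x_3)=\mathcal{F}(x_3,x_2,x_1)$. But that identity swaps the \emph{first} and third arguments; it does not help compare $\mathcal{F}(a,n-1,m)$ with $\mathcal{F}(a,m,n-1)$, where it is the \emph{last two} arguments that are swapped. As written, your sketch does not close.

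The paper's fix is simply to pair differently: match terms sharing the same \emph{multiset} $\{\,\text{second},\text{third}\,\}$. Thus compare $\mathcal{F}(a,m-1,n)$ with $\mathcal{F}(a,n,m-1)$ and $\mathcal{F}(a,m,n-1)$ with $\mathcal{F}(a,n-1,m)$. In each pair the products agree, and since $m-1<n$ and $m\le n-1$, the tie-breaker form of the induction hypothesis applies \emph{directly}, giving $<$ and $\le$ respectively; together with $\mathcal{F}(a-1,m,n)\le\mathcal{F}(a-1,n,m)$ this yields the strict inequality with no auxiliary lemma needed. This cross-pairing is precisely your auxiliary inequality, proved in one line.

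Finally, your remark that the $mn<m'n'$ subcase of Part~(a) ``runs parallel to Part~(b)'' is too quick. Because $\mathcal{F}(a,\cdot,\cdot)$ is not symmetric in its last two slots, one cannot freely assume $m\le n$ and $m'\le n'$; the paper first uses the just-proved tie-breaker to reduce WLOG to $n\le m$, $m'\le n'$, and even then splits into the cases $m'<n'$ versus $m'=n'$ (with a further split on $m-n=2$), invoking the tie-breaker once more inside Case~2. The termwise arithmetic alone does not settle it.
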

\begin{proof}
We begin by the first statement.\\
\linebreak
Suppose first that $mn=m'n'$ with $m<n$. \\
Since $m+n=m'+n'$ then $m=n'$ and $n=m'$. So we need to compare $\mathcal{F}(a,m,n)$ and $\mathcal{F}(a,n,m)$. The proof will be done by induction on $p=m+n$ (so by induction on the order $p+1$ of the tournament).\\
If $p=4$ (tournament on 5 vertices) then the initial step is to compare $\mathcal{F}(1,1,2)$ and $\mathcal{F}(1,2,1)$. $\mathcal{F}(1,1,2)=\mathcal{F}(1,2)+\mathcal{F}(3)+\mathcal{F}(1,1,1)$ and $\mathcal{F}(1,2,1)=\mathcal{F}(2,1)+\mathcal{F}(1,1,1)+\mathcal{F}(1,2)$. Since $\mathcal{F}(3)<\mathcal{F}(2,1)$ by Remark \ref{1,s block}, the result follows.\\
Suppose it's true till $p-1$, i.e. for a tournament of order less than or equal $p$. So let $T$ be a tournament of order $p+1$, $a+m+n=p$, $m<n$.\\
We have $\mathcal{F}(a,m,n)=\mathcal{F}(a,m-1,n)+\mathcal{F}(a,m,n-1)+\mathcal{F}(a-1,m,n)$ and $\mathcal{F}(a,n,m)=\mathcal{F}(a,n,m-1)+\mathcal{F}(a,n-1,m)+\mathcal{F}(a-1,n,m)$.\\
Now by induction, $m-1<n$ so $\mathcal{F}(a,m-1,n)<\mathcal{F}(a,n,m-1)$ (note that if $m=1$, then $\mathcal{F}(a,m-1,n)=\mathcal{F}(a+n)=1< \mathcal{F}(a,n,m-1)=\mathcal{F}(a,n)$ by Remark \ref{1,s block}), and $m\leq n-1$ so $\mathcal{F}(a,m,n-1)\leq \mathcal{F}(a,n-1,m)$, and if $a-1\neq 0$, $\mathcal{F}(a-1,m,n)<\mathcal{F}(a-1,n,m)$, while if $a-1=0$ then $\mathcal{F}(a-1,m,n)=\mathcal{F}(m,n)=\mathcal{F}(n,m)= \mathcal{F}(a-1,n,m)$. Hence $\mathcal{F}(a,m,n)<\mathcal{F}(a,n,m)$.\\
\linebreak
Suppose now that $mn<m'n'$. \\
Since by the first case, $\mathcal{F}(a,n,m)<\mathcal{F}(a,m,n)$ if $n<m$ and $\mathcal{F}(a,m',n')<\mathcal{F}(a,n',m')$ if $m'<n'$, let us suppose that $n\leq m$ and $m'\leq n'$, and prove that $\mathcal{F}(a,m,n)<\mathcal{F}(a,m',n')$.\\
Remark that $mn<m'n'$ and $m+n=m'+n'$ both imply that $m-n\geq 2$, because we can't find $m'$ and $n'$ such that $m+n=m'+n'$ and $mn<m'n'$ when $m=n$ or $m=n+1$ because $m.n$ is maximal. We also have $m',n' > n$ and $m',n'<m$.\\
Let $p=a+m+n=a+m'+n'$. We will also do the proof by induction on $p$.\\
The smallest tournament to satisfy $mn<m'n'$ has 6 vertices (i.e. $p=5$). That is for $m=3$, $n=1$, $m'=2$, $n'=2$, $a=1$.\\
We have $\mathcal{F}(1,3,1)=\mathcal{F}(3,1)+\mathcal{F}(1,2,1)+\mathcal{F}(1,3)$ and $\mathcal{F}(1,2,2)=\mathcal{F}(2,2)+\mathcal{F}(1,1,2)+\mathcal{F}(1,2,1)$. By Proposition \ref{2 block} we have $\mathcal{F}(3,1)<\mathcal{F}(2,2)$ and by a simple calculation we have $\mathcal{F}(1,3)<\mathcal{F}(1,1,2)$, and then we get $\mathcal{F}(1,3,1)<\mathcal{F}(1,2,2)$.\\
Suppose that the statement is true till $p-1$, that is for a tournament of order less than or equal $p$. Let $T$ be a tournament of order $p+1$, $a+m+n=a+m'+n'=p$, with $mn<m'n'$. We have:\\
$\mathcal{F}(a,m,n)=\mathcal{F}(a-1,m,n)+\mathcal{F}(a,m-1,n)+\mathcal{F}(a,m,n-1)$ and $\mathcal{F}(a,m',n')=\mathcal{F}(a-1,m',n')+\mathcal{F}(a,m'-1,n')+ \mathcal{F}(a,m',n'-1)$.\\
We have $\mathcal{F}(a-1,m,n)<\mathcal{F}(a-1,m',n')$ (by induction if $a>1$ and by Proposition \ref{2 block} if $a=1$).
Now, $m>n$ implies that $$(m-1)n=\dfrac{(p-a)^2-(m-n-1)^2}{4} \quad \text{and} \quad m(n-1)=\dfrac{(p-a)^2-(m-n+1)^2}{4}.$$

\noindent We will consider two cases:
\begin{itemize}
\item Case 1: $m'<n'$. Hence \\ $$(m'-1)n'=\dfrac{(p-a)^2-(n'-m'+1)^2}{4},$$and$$m'(n'-1)=\dfrac{(p-a)^2-(n'-m'-1)^2}{4}.$$\\
As a result, $(m-1)n<m'(n'-1)$ and $m(n-1)<(m'-1)n'$, so by induction we have $\mathcal{F}(a,m-1,n)<\mathcal{F}(a,m',n'-1)$ (since $m-1\geq n$ and $m'\leq n'-1$) and $\mathcal{F}(a,m,n-1)<\mathcal{F}(a,m'-1,n')$ (by induction if $n-1\neq 0$ and by Proposition \ref{2,3 block} if $n-1=0$). So we finally get $\mathcal{F}(a,m,n)<\mathcal{F}(a,m',n')$.\\
\item Case 2: $m'=n'$. Hence $$(m'-1)n'=m'(n'-1)=\dfrac{(p-a)^2-1}{4}.$$\\
If $m-n>2$ then $m-n-1>1$, so $(m-1)n<m'(n'-1)$ thus $\mathcal{F}(a,m-1,n)<\mathcal{F}(a,n'-1,m')$ (by induction, since $m-1\geq n$ and $n'-1\leq m'$) $<\mathcal{F}(a,m',n'-1)$ (by the case treated above). And since $m(n-1)<(m'-1)n'$ then $\mathcal{F}(a,m,n-1)<\mathcal{F}(a,m'-1,n')$ (by induction if $n-1\neq 0$ and by Proposition \ref{2,3 block} if $n-1=0$). We finally get $\mathcal{F}(a,m,n)<\mathcal{F}(a,m'n')$.\\
If $m-n=2$ then $m=n+2$, ans since $m>n'=m'>n$ (bc $m+n=n'+m'$ and $mn<m'n'$) then $n'=m'=n+1$. Now we have: $\mathcal{F}(a,m,n)=\mathcal{F}(a,n+2,n)=\mathcal{F}(a-1,n+2,n)+\mathcal{F}(a,n+1,n)+\mathcal{F}(a,n+2,n-1)$ and $\mathcal{F}(a,m',n')=\mathcal{F}(a,n+1,n+1)=\mathcal{F}(a-1,n+1,n+1)+\mathcal{F}(a,n,n+1)+\mathcal{F}(a,n+1,n)$. Now $(n+2)(n-1)=n^2+n-2$ and $n(n+1)=n^2+n$ so $(n+2)(n-1)<n(n+1)$, so $\mathcal{F}(a,n+2,n-1)<\mathcal{F}(a,n,n+1)$ (by induction if $n-1\neq 0$ and by Proposition \ref{2,3 block} if $n-1=0$). Finally we get $\mathcal{F}(a,m,n)<\mathcal{F}(a,m'n')$.
\end{itemize}
\medskip
For the necessary condition, suppose that $\mathcal{F}(a,m,n)<\mathcal{F}(a,m',n')$ with $mn\geq m'n'$ and let's prove $mn=m'n'$ with $m<n$. If $mn>m'n'$ then $\mathcal{F}(a,m,n)>\mathcal{F}(a,m',n')$ by the sufficient condition, which is a contradiction. If $mn=m'n'$ and since $m+n=m'+n'$ we have many cases: If  $m=m', n=n'$ then $\mathcal{F}(a,m,n)=\mathcal{F}(a,m',n')$, a contradiction. If $m=n', n=m'$ then if $m>n$, $\mathcal{F}(a,m,n)>\mathcal{F}(a,m',n')$ by the sufficient condition, a contradiction too. Thus $m<n$.\\
\linebreak
Now we prove the second statement: \\
For the sufficient condition, the proof is similar to the proof of the previous one for $mn<m'n'$. Only note that if $a=1$, we will have in the induction that $\mathcal{F}(m,a-1,n)=\mathcal{F}(m+n)$ and $\mathcal{F}(m',a-1,n')=\mathcal{F}(m'+n')$, so $\mathcal{F}(m,a-1,n)=\mathcal{F}(m',a-1,n')=1$. But this won't cause a problem in proving $\mathcal{F}(m,a,n)<\mathcal{F}(m',a,n')$ for $mn<m'n'$ because the other terms will lead strict inequalities. For the necessary condition, suppose that $\mathcal{F}(m,a,n)<\mathcal{F}(m',a,n')$ and let's prove that $mn<m'n'$. If $mn>m'n'$ then $\mathcal{F}(m,a,n)>\mathcal{F}(m',a,n')$ by the sufficient condition, a contradiction. If $mn=m'n'$ and since $m+n=m'+n'$, then either $m=m', n=n'$ or $m=n', n=m'$. But in both cases we get $\mathcal{F}(m,a,n)=\mathcal{F}(m',a,n')$, a contradiction. Thus $mn<m'n'$.
\end{proof}
\smallskip
\begin{proposition}
For all $m,n,a,b\in \mathbb{N}^*$ we have:
$$m<n \text{ and } a<b \ \Rightarrow \mathcal{F}(m,a,b,n)>\mathcal{F}(m,b,a,n).$$
\end{proposition}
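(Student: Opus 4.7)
My plan is to proceed by strong induction on $p=m+a+b+n$, exploiting the defining recurrence of $\mathcal{F}$. The minimal admissible $p$ is $6$, forced by $m=a=1$, $n=b=2$; a direct unfolding of the recurrence (together with the reduction rules for $0$-entries) handles this base case.

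For the inductive step, I will expand both $\mathcal{F}(m,a,b,n)$ and $\mathcal{F}(m,b,a,n)$ via the recurrence, collecting contributions position by position, so that
\[
\mathcal{F}(m,a,b,n)-\mathcal{F}(m,b,a,n) \;=\; \Delta_1+\Delta_2+\Delta_3+\Delta_4,
\]
where $\Delta_i$ is the difference produced by decrementing the $i$-th entry on both sides. The \emph{outer} differences $\Delta_1$ and $\Delta_4$ already have the required shape: they compare $(x,a,b,y)$ with $(x,b,a,y)$, so the inductive hypothesis applies directly whenever $x,y\in\mathbb{N}^{*}$ and $x<y$. The degenerate cases will be absorbed by earlier results: when $m=1$, $\Delta_1$ reduces to $\mathcal{F}(a,b,n)-\mathcal{F}(b,a,n)$, which is strictly positive by the reversal symmetry of $\mathcal{F}$ combined with the first statement of Proposition \ref{3 block} (products $ab=ba$ equal, and $a<b$); when $n=m+1$, the palindromic identity $\mathcal{F}(m,a,b,m)=\mathcal{F}(m,b,a,m)$ simply gives $\Delta_4=0$.

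The hard part will be the \emph{inner} differences $\Delta_2$ and $\Delta_3$, because in them the naive decomposition pairs $(m,a-1,b,n)$ with $(m,b-1,a,n)$ and $(m,a,b-1,n)$ with $(m,b,a-1,n)$, which are \emph{not} mutual middle-swaps, so the inductive hypothesis does not apply termwise. The key move will be to regroup
\[
\Delta_2+\Delta_3 \;=\; \bigl[\mathcal{F}(m,a-1,b,n)-\mathcal{F}(m,b,a-1,n)\bigr]+\bigl[\mathcal{F}(m,a,b-1,n)-\mathcal{F}(m,b-1,a,n)\bigr],
\]
so that each bracket now compares a tuple with its genuine middle-swap. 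For the first bracket: if $a\geq 2$, induction applies since $a-1<b$ and $m<n$; if $a=1$, property (3) of $\mathcal{F}$ collapses the bracket to $\mathcal{F}(m+b,n)-\mathcal{F}(m,b+n)$, which is strictly positive by Proposition \ref{2 block} because $(m+b)n-m(b+n)=b(n-m)>0$. For the second bracket: induction applies when $a<b-1$, while the subcase $a=b-1$ makes the two tuples identical, contributing $0$.

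Assembling the four contributions, $\Delta_1$, $\Delta_4$, and each of the two regrouped pieces of $\Delta_2+\Delta_3$ are all $\geq 0$, and the first regrouped piece is strictly positive in \emph{every} subcase (either by induction when $a\geq 2$, or by Proposition \ref{2 block} when $a=1$). The sum of non-negative terms with one strictly positive term yields $\mathcal{F}(m,a,b,n)>\mathcal{F}(m,b,a,n)$, completing the induction.
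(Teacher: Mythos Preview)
Your proof is correct and follows essentially the same route as the paper: induction on $p=m+a+b+n$ with base case $p=6$, expansion of both sides via the recurrence, and the crucial cross-pairing of the two inner terms so that each bracket compares a tuple with its genuine middle-swap. The degenerate subcases ($m=1$, $n=m+1$, $a=1$, $b=a+1$) are handled with the same auxiliary results (reversal symmetry, Proposition~\ref{2 block}, Proposition~\ref{3 block}) that the paper uses; your explicit framing of the cross-pairing as a ``regrouping'' of $\Delta_2+\Delta_3$ is exactly what the paper does silently when it pairs $\mathcal{F}(m,a-1,b,n)$ with $\mathcal{F}(m,b,a-1,n)$ and $\mathcal{F}(m,a,b-1,n)$ with $\mathcal{F}(m,b-1,a,n)$.
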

\begin{proof}
The proof is also done by induction on $p=m+a+b+n$. The initial step is to compare $\mathcal{F}(1,2,1,2)$ and $\mathcal{F}(1,1,2,2)$, i.e. for $p=6$, $m=1$, $n=2$, $a=1$, $b=2$. We have $\mathcal{F}(1,2,1,2)=\mathcal{F}(2,1,2)+\mathcal{F}(1,1,1,2)+\mathcal{F}(1,4)+\mathcal{F}(1,2,1,1)$ and $\mathcal{F}(1,1,2,2)=\mathcal{F}(1,2,2)+\mathcal{F}(3,2)+\mathcal{F}(1,1,1,2)+\mathcal{F}(1,1,2,1)$. Now by Proposition \ref{2 block} we have $\mathcal{F}(1,4)<\mathcal{F}(3,2)$ and by Proposition \ref{3 block} we have $\mathcal{F}(2,1,2)<\mathcal{F}(1,2,2)$, moreover $\mathcal{F}(1,1,2,1)=\mathcal{F}(1,2,1,1)$, and we get $\mathcal{F}(1,2,1,2)<\mathcal{F}(1,1,2,2)$. \\
Suppose that the statement is true till $p-1$ (tournament having at most $p$ vertices). So let $T$ be a tournament on $p+1$ vertices, $m+n+a+b=p$, and suppose $m<n$, $a<b$.\\
We have $\mathcal{F}(m,a,b,n)=\mathcal{F}(m-1,a,b,n)+\mathcal{F}(m,a-1,b,n)+\mathcal{F}(m,a,b-1,n)+\mathcal{F}(m,a,b,n-1)$ and $\mathcal{F}(m,b,a,n)=\mathcal{F}(m-1,b,a,n)+\mathcal{F}(m,b-1,a,n)+\mathcal{F}(m,b,a-1,n)+\mathcal{F}(m,b,a,n-1)$. We have the following inequalities:
\begin{itemize}
\item $\mathcal{F}(m,a-1,b,n)>\mathcal{F}(m,b,a-1,n)$. In fact, $a-1<b$, so: If $a-1\neq 0$, by induction we have $\mathcal{F}(m,a-1,b,n)>\mathcal{F}(m,b,a-1,n)$. If $a-1=0$ then $\mathcal{F}(m,a-1,b,n)=\mathcal{F}(m+b,n)$ and $\mathcal{F}(m,b,a-1,n)=\mathcal{F}(m,b+n)$. However, $(m+b)+n=m+(b+n)$ and $(m+b)n=mn+bn>mn+bm=m(b+n)$, so by Proposition \ref{2 block} we have $\mathcal{F}(m+b,n)>\mathcal{F}(m,b+n)$.
\item $\mathcal{F}(m-1,a,b,n)>\mathcal{F}(m-1,b,a,n)$. In fact, if $m-1>0$ then it's true by induction. If $m-1=0$ then since $b>a$, we have $\mathcal{F}(a,b,n)>\mathcal{F}(b,a,n)$ by Proposition \ref{3 block}.
\item $\mathcal{F}(m,a,b-1,n)\geq \mathcal{F}(m,b-1,a,n)$. In fact, if $a<b-1$ then by induction it's true. If $a=b-1$, then $\mathcal{F}(m,a,b-1,n)=\mathcal{F}(m,a,a,n)=\mathcal{F}(m,b-1,a,n)$.
\item $\mathcal{F}(m,a,b,n-1)\geq \mathcal{F}(m,b,a,n-1)$. In fact, if $m<n-1$ it's true by induction. If $m=n-1$ then $\mathcal{F}(m,a,b,n-1)=\mathcal{F}(m,a,b,m)=\mathcal{F}(m,b,a,m)=\mathcal{F}(m,b,a,n-1)$
\end{itemize}
Then using all these inequalities, we get the result.
\end{proof}
\medskip
\noindent On the other hand, using a program for computing $\mathcal{F}$ (which we will discuss in the next section) , we may prove the following statements wrong, even if they seem to be true:
\smallskip
\begin{itemize}
\item For $\alpha=(\alpha_1,\dots,\alpha_s)\in (\mathbb{N}^*)^s$ and $\beta=(\beta_1,\dots,\beta_t) \in (\mathbb{N}^*)^t$, $2\leq s <t$, $\sum_{i=1}^{s} \alpha_i=\sum_{i=1}^t \beta_i$, then $\mathcal{F}(\alpha)<\mathcal{F}(\beta)$.\\
\underline{Counter-example:} $\mathcal{F}(3,3)=20=\mathcal{F}(1,1,4)$, $\mathcal{F}(3,4)=35>\mathcal{F}(1,1,5)=27$.
\item For $a,m,n,m',n' \in \mathbb{N}^*$ with $a+m+n=a+m'+n'$ then $mn<m'n'\Rightarrow \mathcal{F}(a,m,n)<\mathcal{F}(m',a,n')$.\\
\underline{Counter-example:} $\mathcal{F}(1,2,4)=85>\mathcal{F}(3,1,3)=69$ while $2.4=8 < 3.3=9$.
\item For $a,m,n,m',n' \in \mathbb{N}^*$ with $a+m+n=a+m'+n'$ then $mn<m'n'\Rightarrow \mathcal{F}(m,a,n)<\mathcal{F}(a,m',n')$.\\
\underline{Counter-examples:} $\mathcal{F}(2,11,5)=637924>\mathcal{F}(11,3,4)=631787$, $\mathcal{F}(2,12,5)=1015988>\mathcal{F}(12,3,4)=984503$, while $2.5=10<3,4=12)$.
%However, for the same $m,n,m',n'$ but for $a<11$, the statement is true, for example: $\mathcal{F}(2,7,5)=65857<\mathcal{F}(7,3,4)=74502$, $\mathcal{F}(2,8,5)=125411<\mathcal{F}(8,3,4)=135927$, $\mathcal{F}(2,9,5)=225589<\mathcal{F}(9,3,4)=236027$, $\mathcal{F}(2,10,5)=387023<\mathcal{F}(10,3,4)=393107$.\\
%So we remark that the first term is smaller than the second term, but the difference between them decreases as $a$ tends to $10$, and then for $a=11$, the second term becomes bigger, and the difference between the two terms will increase, as $a$ gets bigger.\\
%We may also notice that the number in the middle of the tuple plays a role: in the first tuple it is $a$, while in the second tuple it is $m'$, and at some point where $a$ becomes big enough with respect to $m'$, the first term becomes bigger than the second one.
\item For $a,a',m,n,m',n' \in \mathbb{N}^*$ with $a+m+n=a'+m'+n'$ then $amn<a'm'n'\Rightarrow \mathcal{F}(a,m,n)<\mathcal{F}(a',m',n')$.\\
\underline{Counter-example:} $\mathcal{F}(6,7,3)=835549>\mathcal{F}(4,4,8)=614823$ with $6.7.3=126<4.4.8=128$.
%However, $\mathcal{F}(6,3,7)=529957<\mathcal{F}(4,8,4)=806651$.\\
%So putting the big number in the middle of the first tuple and the small number in the middle of the second tuple is what helped to find the counter-example.\\
%But the difference should be big enough. For example: $\mathcal{F}(5,7,2)=65857<\mathcal{F}(4,4,6)=150723$ and $\mathcal{F}(5,2,7)=33606<\mathcal{F}(4,6,4)=178751$ with $5.7.2=70>4.4.6=96$. Also, $\mathcal{F}(4,6,1)=2190<\mathcal{F}(3,3,5)=6566$ and $\mathcal{F}(4,1,6)=791<\mathcal{F}(3,5,3)=8051$ with $4.6.1=24<3.3.5=45$.
\item Let $(\alpha_1,\dots,\alpha_s)$ and $(\beta_1,\dots,\beta_s)$ $\in (\mathbb{N}^*)^s$, and $\sum_{i=1}^s \alpha_i=\sum_{i=1}^s \beta_i$. If $(\beta_1,\beta_2,\dots,\beta_s)\neq (\alpha_1,\alpha_2,\dots,\alpha_s)$ and $(\beta_1,\beta_2,\dots,\beta_s)\neq(\alpha_s,\alpha_{s-1},\dots,\alpha_1)$, then $\mathcal{F}(\alpha_1,\dots,\alpha_s)\neq \mathcal{F}(\beta_1,\dots,\beta_s)$.\\
\underline{Counter-example:} $\mathcal{F}(1,3,1)=\mathcal{F}(2,1,2)=19,$ $ \mathcal{F}(1,2,3,1)=\mathcal{F}(2,1,2,2)=315,$ $ \mathcal{F}(2,4,2)=\mathcal{F}(3,2,3)=379$.
\end{itemize}

%\noindent However, we know nothing yet about the validity of many statements, as for example:
%\begin{itemize}
%\item For $a_1,...,a_s,a'_1,...,a'_s,m,n,m',n' \in \mathbb{N}^*$ we have $\mathcal{F}(a_1,\dots,a_s,m,n)<$\\$\mathcal{F}(a_1,\dots,a_s,m',n')$ $\iff (mn<m'n') \ or \ (mn=m'n', \ m<n)$.
%\item For $a,b,m,n,a',b' \in \mathbb{N}^*$, we have $(m<n) \ and \ (ab<a'b') \Rightarrow \mathcal{F}(m,a,b,n)<\mathcal{F}(m,a',b',n)$.
%\end{itemize}
\medskip
\section{Algorithmic approach of $\mathcal{F}$}

The function $\mathcal{F}$, allows us to construct a program for computing the number of oriented paths in transitive tournaments. As we will see, by comparing the different cases, we are led to state a conjecture about paths in transitive tournaments, and to discuss an interesting property about antidirected Hamiltonian paths in $TT_n$. \\
\linebreak
We first introduce in this last section the program, built using Python, to compute the values of the path-function $\mathcal{F}$.
\begin{lstlisting}[language=Python, caption={Program that computes the values of the function $\mathcal{F}$}.]

"""
This routine represents the mapping "%*\color{Strings}$\mathcal{F}$*)".
It reduces a tuple of positive integers into a single positive
integer according to the recurrence relation and the 4 properties
 atisfied by "%*\color{Strings}$\mathcal{F}$*)".
"""

def f(a):
  l = len(a)

  assert (l > 0), "f() is undefined"
  assert (l > 1 or a[0] > 0), "f(0) is undefined"

  if l == 1:
    return 1
  elif a[0] == 0:
    return f(a[1:])
  elif a[-1] == 0:
    return f(a[:-1])
  else:
    try:
      i = a.index(0)
      return f(
        a[:i - 1] +
        [a[i - 1] + a[i + 1]] +
        a[i + 2:]
      )
    except ValueError:
      # If `a` doesn't contain any zeros
      return sum(
        f(a[:i] + [a[i] - 1] + a[i + 1:])
        for i in range(l)
      )

import time
import sys

# Change the next line to your needs
a = [1, 2, 1, 1]

start_time = time.time()
ans = f(a)
end_time = time.time()

print('{} => {}'.format(a, ans))
print(
  'Took: {:.3} seconds'.format(end_time - start_time),
  file=sys.stderr
)
\end{lstlisting}

In order to investigate more about the properties of the path-function $\mathcal{F}$, we created a new program, based on the previous one, allowing us to compute all the possible values of $\mathcal{F}(a_1,\dots,a_s)$, for $1\leq s \leq p$, and $p=\sum_{i=1}^s a_i$.
%$$f(a_1,\dots,a_s), \ 1\leq s \leq p, \ p=\sum_{i=1}^s a_i$$

\begin{lstlisting}[language=Python, caption={Program that computes all values of $\mathcal{F}(a_1,\dots,a_s)$, $1\leq s \leq p$, and $p=\sum_{i=1}^s a_i$}.]

"""
Define the memorization system
"""

class memoize(dict):
  def __init__(self, f):
    self.f = f

    # Will hold some statistics
    self.total = 0
    self.miss = 0

  def __call__(self, a):
    self.total += 1
    return self[tuple(a)]

  def __missing__(self, a):
    self.miss += 1
    result = self.f(list(a))
    # Save the result for a
    self[a] = result
    # and for the reverse of a, since they are always equal
    self[a[::-1]] = result
    return result

"""
This routine represents the mapping "%*\color{Strings}$\mathcal{F}$*)".
It reduces a tuple of positive integers into a single positive
integer according to the recurrence relation and the 4 properties 
satisfied by "%*\color{Strings}$\mathcal{F}$*)".
To significantly improve the performance (speed) of this routine, 
a memorization system is used.
It allows a fast answer lookup for known tuples (already seen
before), by saving all the results in a lookup table with their 
corresponding tuples and their reverse (since they get reduced to
the same result).
"""

@memoize
def f(a):
  l = len(a)

  assert (l > 0), "f() is undefined"
  assert (l > 1 or a[0] > 0), "f(0) is undefined"

  if l == 1:
    return 1
  elif a[0] == 0:
    return f(a[1:])
  elif a[-1] == 0:
    return f(a[:-1])
  else:
    try:
      i = a.index(0)
      return f(
        a[:i - 1] +
        [a[i - 1] + a[i + 1]] +
        a[i + 2:]
      )
    except ValueError:
      # If `a` doesn't contain any zeros
      return sum(
        f(a[:i] + [a[i] - 1] + a[i + 1:])
        for i in range(l)
      )

"""
This routine generates a list of all possible tuples with 
cardinality less than or equal `p`, having the sum of all its 
elements equal `p`.
"""

def all_permutations(p):
  if p > 0:
    yield [p]
    for s in range(p - 1, 0, -1):
      for a in all_permutations(p - s):
        yield [s] + a

"""
This routine prints the results in a pretty format.
"""

def print_results(p, results):
  iw = len(str(len(results)))
  aw = 3 * p
  for (i, (a, ans)) in enumerate(results):
    print('{:>{iw}}: {:<{aw}} => {:,}'.format(
      i + 1, str(a), ans, iw=iw, aw=aw
    ))

"""
This is the main routine that combines all the previous ones.
It generates all the permutations and their corresponding
results from `f`, sorts them in ascending order and then 
prints them along with some execution statistics.
"""

def main(p):
  import time
  import sys

  start_time = time.time()
  results = [(a, f(a)) for a in all_permutations(p)]
  f_time = time.time()

  results.sort(key=lambda r: r[1])
  print_results(p, results)
  end_time = time.time()

  print(
    'Cache hit: {:.2%}'.format(1 - f.miss / f.total),
    file=sys.stderr
  )
  print(
    'Took to apply f: {:.3} seconds'.format(
      f_time - start_time
    ), 
    file=sys.stderr
  )
  print(
    'Took to sort and print: {:.3} seconds'.format(
      end_time - f_time
    ),
    file=sys.stderr
  )
  print(
    'Took in total: {:.3} seconds'.format(
      end_time - start_time
    ),
    file=sys.stderr
  )

main(6)
\end{lstlisting}
\bigskip
\medskip
\par We now give the first lists of numbers computed using the above program, giving all the possible values of $\mathcal{F}(a_1,\dots,a_s)$, $1\leq s \leq p$, for $p=\sum_{i=1}^s a_i$, and where $3\leq p \leq 7$. (The answers are in ascending order).\\

\begin{multicols}{2}
[For $p=3$:]
\verbatiminput{f_3.txt}
\end{multicols}

\begin{multicols}{3}
[For $p=4$:]
\verbatiminput{f_4.txt}
\end{multicols}

\begin{multicols}{2}
[For $p=5$:]
\verbatiminput{f_5.txt}
\end{multicols}

\begin{multicols}{2}
[For $p=6$:]
\verbatiminput{f_6.txt}
\end{multicols}

\begin{multicols}{2}
[For $p=7$:]
\verbatiminput{f_7.txt}
\end{multicols}

%\begin{multicols}{2}
%[For $p=8$:]
%\verbatiminput{f_8.txt}
%\end{multicols}
\medskip

\par Referring to the above data, and going on till $p=18$, we may observe that $\forall \ 3 \leq p \leq 18$, $\mathcal{F}(1,1,\dots,1)$ (where $1$ is repeated $p$ times) always corresponds to the maximal value between all $\mathcal{F}(a_1,\dots,a_s)$, where $1\leq s \leq p$, and $p=\sum_{i=1}^s a_i$. We also remark that $\mathcal{F}(1,2,1,\dots,1)$ is the next biggest value, and $\mathcal{F}(1,2,1,\dots,1)$ is always bigger than $\frac{\mathcal{F}(1,1,\dots,1)}{2}$. One may wonder if these properties hold for any $p$.

On the other hand, if $p$ is odd, (that is, the corresponding transitive tournament is of even order $p+1$), then the tuple $(1,1,\dots,1)$ is not symmetric. If $p$ is even, (the corresponding transitive tournament is of odd order $p+1$), then the tuple $(1,1,\dots,1)$ is symmetric, and as a consequence, $(1,2,1,\dots,1)$ is not symmetric.

Hence, using Theorem $\ref{MAIN}$, we set the following conjecture:
\begin{conjecture}
Let $TT_n$ be a transitive tournament on $n$ vertices.\\
Then $\forall$ $\alpha=(\alpha_1,\alpha_2,\dots,\alpha_s)\in \mathbb{K}_{s}$, $\sum_{i=1}^s \alpha_i=n-1$, if $n$ is even we have: $$f_{TT_n}(1,-1,1,\dots,-1,1)\geq f_{TT_n}(\alpha_1,\alpha_2,\dots,\alpha_s),$$ where $(1,-1,1,\dots,-1,1)$ has $n-1$ components, while if $n$ is odd, we have: $$f_{TT_n}(1,-2,1,-1,1,\dots,-1,1)\geq f_{TT_n}(\alpha_1,\alpha_2,\dots,\alpha_s),$$
where the number of components of $(1,-2,1,\dots,1)$ is $n-2$.
\end{conjecture}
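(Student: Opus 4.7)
The plan is to proceed by induction on $n$, translating the conjecture into inequalities for the path-function $\mathcal{F}$ via Theorem \ref{MAIN}. The key tool is a \emph{refinement lemma} asserting that, for any $(\alpha_1, \dots, \alpha_s) \in (\mathbb{N}^*)^s$ and any $a, b \in \mathbb{N}^*$ with $a + b = \alpha_i$,
\[
\mathcal{F}(\alpha_1, \dots, \alpha_{i-1}, a + b, \alpha_{i+1}, \dots, \alpha_s) \;<\; \mathcal{F}(\alpha_1, \dots, \alpha_{i-1}, a, b, \alpha_{i+1}, \dots, \alpha_s).
\]
I would prove this by induction on $p = \sum_j \alpha_j$: applying the recurrence of $\mathcal{F}$ to both sides, the outer summands (reducing entries at positions other than $i, i+1$) match up term by term via the inductive hypothesis, and the remaining inner comparison reduces to an inequality of the form $\mathcal{F}(L^{(i)}) \leq \mathcal{F}(R^{(i)}) + \mathcal{F}(R^{(i+1)})$, where $L^{(i)}, R^{(i)}, R^{(i+1)}$ denote the inner reductions on each side. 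This is handled by cases: when $a, b \geq 2$, $L^{(i)}$ is obtained from $R^{(i)}$ by a direct merging and one invokes the inductive refinement; when $a = 1$ or $b = 1$, the merging rule $(3)$ fires and an opposite-direction refinement relates $L^{(i)}$ to one of $R^{(i)}, R^{(i+1)}$.

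For $n$ even, $p = n-1$ is odd and the alternating tuple $(1, -1, 1, \dots, -1, 1)$ has odd length, hence is non-symmetric; Theorem \ref{MAIN} gives $f_{TT_n}(1, -1, \dots, 1) = \mathcal{F}(1, 1, \dots, 1)$ (with $p$ ones). Iterating the refinement lemma, every other composition $\alpha$ of $p$ satisfies $\mathcal{F}(\alpha) < \mathcal{F}(1, 1, \dots, 1)$, and together with $f_{TT_n}(\alpha) \leq \mathcal{F}(\alpha)$ this settles the even case.

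For $n$ odd, $p$ is even, the all-ones tuple is now symmetric so $f_{TT_n}(1, -1, \dots, -1) = \mathcal{F}(1, \dots, 1)/2$, and the conjectured maximizer $\beta = (1, -2, 1, -1, \dots, -1, 1)$ is non-symmetric with $f_{TT_n}(\beta) = \mathcal{F}(1, 2, 1, \dots, 1)$ (the positive tuple of length $p - 1$). Two inequalities then suffice: first, $\mathcal{F}(1, 2, 1, \dots, 1) \geq \mathcal{F}(\alpha)$ for every non-symmetric composition $\alpha$ of $p$ distinct from $\beta$ and $-\overline{\beta}$; and second, $2\,\mathcal{F}(1, 2, 1, \dots, 1) \geq \mathcal{F}(1, \dots, 1)$, which together with the refinement lemma handles every symmetric $\alpha$. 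I would prove the first by strengthening the induction to assert that $\mathcal{F}(1, 2, 1, \dots, 1)$ is the second-largest value of $\mathcal{F}$ on compositions of $p$, combining the refinement lemma with an auxiliary monotonicity result showing that, among compositions of the form $(1, \dots, 1, 2, 1, \dots, 1)$, the value of $\mathcal{F}$ is maximized when the $2$ sits next to the boundary. For the second, I would expand both sides via the recurrence: the expansion of $\mathcal{F}(1, 2, 1, \dots, 1)$ produces extra ``double-$2$'' summands arising when rule $(3)$ fires on the interior $1$'s, and these furnish precisely the slack needed to dominate the ``single-$2$'' terms appearing in the expansion of $\mathcal{F}(1, \dots, 1)$.

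The main obstacle is the odd case, and in particular the first inequality above: the refinement lemma alone is not sharp enough to identify the second-largest $\mathcal{F}$-value, so a genuinely new ``single-$2$ monotonicity'' comparison must be proved separately. The numerical tables verified through $p = 18$ leave little doubt about the asserted ranking, but promoting it into an inductive proof will require careful bookkeeping of the merge-induced terms appearing in the recurrence expansion of $\mathcal{F}(1, 2, 1, \dots, 1)$.
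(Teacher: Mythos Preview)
The paper does not prove this statement --- it is presented explicitly as an open conjecture, supported only by computer verification for $p\le 18$. There is therefore no paper proof to compare your attempt against, and the relevant question is simply whether your proposal constitutes a proof.

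Your refinement lemma (splitting any part of a composition strictly increases $\mathcal{F}$) is sound, and your inductive sketch for it goes through: the outer recurrence terms pair off as smaller instances of the same refinement (this persists even when a $0$ triggers property~(3), since the merged tuples on the two sides remain in a coarsening/refinement relation), and the inner comparison $\mathcal{F}(\dots,a{+}b{-}1,\dots) < \mathcal{F}(\dots,a{-}1,b,\dots)+\mathcal{F}(\dots,a,b{-}1,\dots)$ follows from one more use of the hypothesis together with positivity of the extra summand. With this lemma the even-$n$ case is indeed settled, since the alternating tuple of odd length is non-symmetric and $f_{TT_n}(\alpha)\le\mathcal{F}(\alpha)<\mathcal{F}(1,\dots,1)=f_{TT_n}(1,-1,\dots,1)$ for every other $\alpha$. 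That already goes beyond what the paper establishes.

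The odd-$n$ case, however, is a genuine gap --- and you say as much yourself. Neither of the two missing ingredients is routine. Showing that $\mathcal{F}(1,2,1,\dots,1)$ is the second-largest value requires comparing it against every other composition containing a part $\ge 2$, and the refinement lemma gives no purchase there: it does not, for instance, order $\mathcal{F}(1,2,1,\dots,1)$ against $\mathcal{F}(2,1,1,\dots,1)$ or against $\mathcal{F}(1,1,\dots,1,2,1,\dots,1)$, since none of these is a refinement of another. And the inequality $2\,\mathcal{F}(1,2,1,\dots,1)\ge\mathcal{F}(1,\dots,1)$ is a two-sided quantitative estimate, not a monotonicity; your remark about ``double-$2$ summands furnishing the slack'' is a heuristic, not an argument. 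Until these two steps are actually carried out, the odd case --- and hence the full conjecture --- remains open in your proposal just as it does in the paper.
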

If the conjecture is true, we can deduce that in a transitive tournament of even order, the number of antidirected Hamiltonian paths starting with a forward arc is the maximum of the numbers of oriented Hamiltonian paths starting with a forward block, for all given types.

\newpage

%\bigskip
%\medskip
%It seems the paths-function $\mathcal{F}$ holds many interesting properties that could tell a lot about the number of oriented Hamiltonian paths in a transitive tournament. We ask the following:
%\begin{problem}
%Can we find a combinatorial function $\mathcal{G}$, that allows us to compute the exact number of oriented Hamiltonian cycles of any given type in a transitive tournament?
%\end{problem}

\noindent \textbf{Acknowledgments.}
We would like to thank Ziad El Khoury Hanna for the help he provided in developing the programs, the Lebanese University for the PhD grant, and Campus France for the Eiffel excellence scholarship (Eiffel 2018).
\bigskip
\medskip

\bigskip
\bigskip

\begin{thebibliography}{99}

\bibitem{Alon} N. Alon, The maximum number of Hamiltonian paths in tournaments, Combinatorica, (10):319-324, 1990.

\bibitem{Busch} A. H. Busch, A note on the number of Hamiltonian paths in strong tournaments, The electronic journal of combinatorics, (13):N3, 2006.

%\bibitem{DA} A. D\'esir\'e Andr\'e, Sur les permutations altern\'ees, Journal de math\'ematiques pures et appliqu\'ees troisi\`eme s\'erie (7) (1881) 167-184.

\bibitem{ElSahili-AA2} A. El Sahili and M. Abi Aad, Antidirected Hamiltonian paths and directed cycles in tournaments, Discrete Mathematics 341 (2018) 2018-2027.

\bibitem{ESandGH} A. El Sahili and Z. Ghazo Hanna, About the number of oriented Hamiltonian paths and cycles in tournaments, accepted.

\bibitem{Moon} J. W. Moon, The minimum number of spanning paths in a strong tournament, Publ. Math. Debrecen, (19):101-104, 1972.

\bibitem{Rosenfield2} M. Rosenfeld, Antidirected Hamiltonian Circuits in Tournaments, Journal of Combinatorial Theory (B) 16 (1974), 234-242.

\bibitem{Szele} T. Szele, Kombinatorikai vizsgalatok az iranyiott tejles graffal kapcsolatban, Acta Litt. Szeged, (50): 223-256, 1943.

\end{thebibliography}
\end{document}